\newtheoremstyle{thm}
{9pt}
{9pt}
{\itshape}
{}
{\bfseries}
{.}
{ }
{}
\theoremstyle{thm}
\newtheorem{theorem}{Theorem}[section]
\newtheorem{corollary}[theorem]{Corollary}
\newcommand{\vertk}{\stackrel{{\cal D}}{\longrightarrow}}
\newcommand{\fsk}{\stackrel{{\rm a.s.}}{\longrightarrow}}
\newtheoremstyle{def}
{9pt}
{9pt}
{}
{}
{\bfseries}
{.}
{ }
{}
\theoremstyle{def}
\newcommand{\R}{\mathbb{R}} 
\renewcommand{\footnoterule}{%
	\kern -3.5pt
	\hrule width \textwidth height 1pt
	\kern 3.5pt
}
\def\blfootnote{\xdef\@thefnmark{}\@footnotetext}
\title{A new test of multivariate normality by a double estimation in a characterizing PDE}
\author{Philip D\"orr\\
Institute of Stochastics, \\
Karlsruhe Institute of Technology (KIT), \\
Englerstr. 2, D-76133 Karlsruhe. \\
\And  Bruno Ebner\\
 Institute of Stochastics, \\
Karlsruhe Institute of Technology (KIT), \\
Englerstr. 2, D-76133 Karlsruhe. \\
\texttt{Bruno.Ebner@kit.edu}\\
\And
Norbert Henze\\
Institute of Stochastics, \\
Karlsruhe Institute of Technology (KIT), \\
Englerstr. 2, D-76133 Karlsruhe. \\
\texttt{Norbert.Henze@kit.edu}\\
}
\begin{document}

\date{\today}
\maketitle

\blfootnote{ {\em MSC 2010 subject
classifications.} Primary 62H15 Secondary 62G10, 62E10}
\blfootnote{
{\em Key words and phrases} Test for multivariate normality, affine invariance, weighted $L^2$-statistic, consistency, Laplace operator, harmonic oscillator}

\begin{abstract}
This paper deals with testing for nondegenerate normality of a $d$-variate random vector $X$ based on
a random sample $X_1,\ldots,X_n$ of $X$. The rationale of the test is that the characteristic function $\psi(t) = \exp(-\|t\|^2/2)$ of the standard normal distribution in $\mathbb{R}^d$ is the only solution of the partial differential equation $\Delta f(t) = (\|t\|^2-d)f(t)$, $t \in \mathbb{R}^d$, subject to the condition $f(0) = 1$. By contrast with a recent approach that bases a test for multivariate normality on the
difference $\Delta \psi_n(t)-(\|t\|^2-d)\psi(t)$, where $\psi_n(t)$ is the empirical
characteristic function of suitably scaled residuals of $X_1,\ldots,X_n$, we consider a
weighted $L^2$-statistic that employs $\Delta \psi_n(t)-(\|t\|^2-d)\psi_n(t)$. We derive asymptotic properties of the test under the null hypothesis and alternatives. The test is affine invariant and consistent against general alternatives, and it exhibits high power when compared with prominent competitors.
\end{abstract}

\section{Introduction}\label{sec:Intro}
A useful tool for assessing the fit of data to a family of distributions are empirical counterparts of distributional characterizations.
Such characterizations often emerge as solutions of an equation of the type $\rho({\rm D}f,f)=0$.
Here, $f$ may be the moment generating function, the Laplace transform, or the characteristic function, and D denotes a differential operator, i.e., this operator can be regarded as ordinary differentiation if $f$ is a function of only one variable or, for instance, the Laplace operator in the multivariate case. Such (partial) differential equations have been used to test for multivariate normality,
see \cite{DEH:2019,HV:2019}, exponentiality, see \cite{BH:1991}, the gamma distribution, see \cite{HME:2012}, the inverse Gaussian distribution,
see \cite{HK:2002}, the beta distribution, see \cite{RM:2018}, the univariate and multivariate skew-normal distribution, see \cite{M:2010} and \cite{MH:2010}, and the Rayleigh distribution, see \cite{MI:2003}. In all these references, the authors propose a goodness-of-fit test by plugging in an empirical counterpart $f_n$ for $f$ into $\rho({\rm D}f,f)$, and by measuring the deviation from the zero function in a suitable function space. If, under the hypothesis to be tested, the function $f$ has a closed form and is known, there are two options for obtaining an empirical counterpart to the characterizing equation, namely $\rho({\rm D}f_n,f)=0$, or $\rho({\rm D}f_n,f_n)=0$. To the best of our knowledge, the effect of considering both options for the {\it same testing problem} and to study the consequences on the performance of the resulting test statistics has not yet been considered, neither from a theoretical point of view, nor in a simulation study. In this spirit, the purpose of this paper is to investigate the effect on the power of a recent test for multivariate normality based on a characterization of the multivariate normal law in connection with the harmonic oscillator, see \cite{DEH:2019}.

In what follows, let $d\ge 1$ be a fixed integer, and let $X,X_1,\ldots,X_n,\ldots$ be independent and identically distributed (i.i.d.)
 $d$-dimensional random (column) vectors, that are defined on a common probability space $(\Omega,{\cal A},\mathbb{P})$.
We write $\mathbb{P}^X$ for the distribution of $X$, and we denote the $d$-variate normal law with expectation $\mu$
and nonsingular covariance matrix $\Sigma$ by N$_d(\mu,\Sigma)$. Moreover,
$\mathcal{N}_d=\{{\rm N}_d(\mu,\Sigma):\mu\in\mathbb{R}^d,\,\Sigma\in\mathbb{R}^{d\times d}\; \mbox{positive definite}\}
$
stands for the class of all nondegenerate $d$-variate normal distributions. To check the assumption of multivariate normality means to test the hypothesis
\begin{equation}\label{H0}
H_0:\,\mathbb{P}^X\in\mathcal{N}_d,
\end{equation}
against general alternatives. The starting point of this paper is Theorem 1.1 of \cite{DEH:2019}. To state this result,
let $\Delta$ denote the Laplace operator,  $\|\cdot\|$ the Euclidean norm in $\mathbb{R}^d$, and  I$_d$  the identity matrix of size $d$.
Then Theorem  1.1 of \cite{DEH:2019} states that the characteristic function $\psi(t)=\exp\left(-\|t\|^2/2\right)$, $t\in\R^d,$ of the $d$-variate
standard normal distribution N$_d(0,{\rm I}_d)$  is the unique solution of the partial differential equation
\begin{equation}\label{PDE}
\left\{\begin{array}{l}\Delta f(x) - (\|x\|^2-d) f(x)=0,\quad x\in\R^d,\\ f(0)=1.\end{array}\right.
\end{equation}
Writing $\overline{X}_n= n^{-1} \sum_{j=1}^nX_j$ for the sample mean and $S_n= n^{-1} \sum_{j=1}^n(X_j-\overline{X}_n)(X_j-\overline{X}_n)^\top$
 for the sample covariance matrix of $X_1,\ldots,X_n$, respectively, where the superscript $\top$ means transposition, the standing tacit assumptions
 that $\mathbb{P}^X$ is absolutely continuous with respect to Lebesgue measure and $n \ge d+1$ guarantee that $S_n$ is invertible almost surely, see \cite{EP:1973}.
The test statistic is based on the so-called {\em scaled residuals}
\[
Y_{n,j}=S_n^{-1/2}(X_j-\overline{X}_n), \quad j =1,\ldots,n.
\]
Here, $S_n^{-1/2}$ is the unique symmetric
positive definite square root of $S_n^{-1}$.
Letting $\psi_n(t)=n^{-1}\sum_{j=1}^n\exp({\rm{i}}t^\top Y_{n,j})$, $t\in\R^d$, denote the empirical characteristic function (ecf) of $Y_{n,1},\ldots,Y_{n,n}$,
the test statistic proposed in \cite{DEH:2019} is
\begin{equation}\label{Tna}
T_{n,a} = n\int_{\mathbb{R}^d}\left|\Delta\psi_n(t)-\Delta \psi(t) \right|^2w_a(t)\, \mbox{d}t,
\end{equation}
where
\begin{equation}\label{defwa}
w_a(t)= \exp(-a\|t\|^2), \quad t \in \R^d,
\end{equation}
and $a>0$ is a fixed constant. The statistic $T_{n,a}$ has a nice closed-form expression as a function
of $Y_{n,i}^\top Y_{n,j}$, $i,j \in \{1,\ldots,n\}$ (see display (10)-(12) of \cite{DEH:2019}) and is thus
invariant with respect to full-rank affine transformations of $X_1,\ldots,X_n$. Theorems 2.2 and 2.3 of
\cite{DEH:2019} show that, elementwise on the underlying probability space, suitably rescaled versions of
$T_{n,a}$ have limits as $a \to \infty$ and $a \to 0$, respectively. In the former case, the limit
is a measure of multivariate skewness, introduced in     \cite{MRS:1993}, whereas Mardia's time-honored
measure of multivariate kurtosis (see \cite{MAR:1970}) shows up as $a \to 0$. As $n \to \infty$,
the statistic $T_{n,a}$ has a nondegenerate limit null distribution (Theorem 4.1 of \cite{DEH:2019}),
and a test of \eqref{H0} that rejects $H_0$ for large values of $T_{n,a}$ is able to detect
alternatives that approach $H_0$ at the rate $n^{-1/2}$, irrespective of the dimension $d$ (Corollary 5.2 of
\cite{DEH:2019}). Under an alternative distribution satisfying $\mathbb{E}\|X\|^4 < \infty$, $n^{-1}T_{n,a}$ converges
almost surely to a measure of distance $\Delta_a$ between $\mathbb{P}^X$ and the class ${\cal N}_d$ (Theorem 6.1 of \cite{DEH:2019}).
As a consequence, the test for multinormality based on $T_{n,a}$ is consistent against any such alternative.
By Theorem 6.5 of \cite{DEH:2019}, the sequence $\sqrt{n}(n^{-1}T_{n,a}-\Delta_a)$ converges in distribution to a centered
normal law. Since the variance of this limit distribution can be estimated consistently from $X_1,\ldots,X_n$ (Theorem 6.7
of \cite{DEH:2019}), we have an asymptotic confidence interval for $\Delta_a$.

The novel approach taken in this paper is to replace {\em both} of the functions $f$ occurring in   \eqref{PDE} by the
ecf $\psi_n$. Since, under $H_0$, $\Delta \psi_n(t)$ and $(\|t\|^2 -d)\psi_n(t)$ should be close to each other for large $n$,
it is tempting to see what happens if, instead of $T_{n,a}$ defined in   \eqref{Tna}, we base a test of $H_0$ on the weighted $L^2$-statistic
\begin{equation}\label{defuna}
U_{n,a} = n\int_{\R^d}\left|\Delta\psi_n(t)-\left(\|t\|^2-d\right)\psi_n(t)\right|^2w_a(t)\, \mbox{d}t
\end{equation}
and reject $H_0$ for large values of $U_{n,a}$.

Since $\Delta\psi_n(t)=- n^{-1} \sum_{j=1}^n\|Y_{n,j}\|^2\exp({\rm{i}}t^\top Y_{n,j})$, the relation
\begin{eqnarray}\label{imprel}
     &&\int_{\R^d} (\|t\|^2 - d)^2\cos(t^{\top}c) \exp(-a\|t\|^2) {\rm d}t \\ \nonumber
    &=& \left(\frac{\pi}{a}\right)^{d/2}\frac{16d^2a^3(a\! -\! 1) + 4d(d\! +\! 2)a^2 + (8da^2\! -\!  4(d\! +\! 2)a)\|c\|^2\! +\!  \|c\|^4}{16a^4}\exp\left(-\frac{\|c\|^2}{4a}\right),
\end{eqnarray}
valid for $c \in \mathbb{R}^d$ and $a>0$,  and tedious but straightforward calculations yield the representation
\begin{align} \label{repruna}
    U_{n,a} =~& \left(\frac{\pi}{a}\right)^{d/2}\frac{1}{n}\sum_{j,k=1}^n \Biggl[\|Y_{n,j}\|^2\|Y_{n,k}\|^2\exp\left(-\frac{\|Y_{n,j}\! -\!  Y_{n,k}\|^2}{4a}\right) \\ \nonumber
    &-(\|Y_{n,j}\|^2\! + \!  \|Y_{n,k}\|^2)\frac{1}{4a^2}\bigl(\|Y_{n,j}\! -\!  Y_{n,k}\|^2 + 2ad(2a\! -\! 1)\bigr)\exp\left(-\frac{\|Y_{n,j}\! -\! Y_{n,k}\|^2}{4a}\right) \\  \nonumber
    &+\frac{1}{16a^4}\exp\left(-\frac{\|Y_{n,j}\! - \!  Y_{n,k}\|^2}{4a}\right)\Bigl(16d^2a^3(a\! -\! 1) + 4d(d\! +\! 2)a^2  + \|Y_{n,j}\! - \! Y_{n,k}\|^4 \\ \nonumber
    &\hspace{5.7cm}+ \bigl(8da^2\! -\!  4(d\! +\! 2)a\bigr)\|Y_{n,j}\! - \!  Y_{n,k}\|^2\Bigr)\Biggr],
\end{align}
which is amenable to computational purposes. Moreover, $U_{n,a}$ turns out to be affine invariant.

The rest of the paper is organized as follows. In Section \ref{secinfty}, we derive the elementwise limits of $U_{n,a}$,
 after suitable transformations, as $a \to 0$ and $a \to \infty$.   Section \ref{seclimitnuuld} deals with the
 limit null distribution of $U_{n,a}$ as $n \to \infty$.  In Section \ref{seclimitsalt}, we show that, under the condition $\mathbb{E}\|X\|^4 < \infty$, $n^{-1}U_{n,a}$ has an almost sure limit as $n \to \infty$ under a fixed alternative to normality. As a consequence, the test based on $U_{n,a}$ is consistent against any such alternative. Moreover, we prove that the asymptotic distribution of $U_{n,a}$, after a suitable transformation, is  a centered normal distribution. In Section \ref{secsimul}, we present the results of a simulation study that compares the power of the test for normality based on $U_{n,a}$ with that of prominent competitors. Section \ref{secdata} shows a real data example, and Section \ref{secconclus} contains some conclusions and gives an outlook on potential further work.

\section{The limits $a\to 0$ and $a \to \infty$}\label{secinfty}
This section considers the (elementwise) limits of $U_{n,a}$ as $a \to 0$ and $a \to \infty$. The results shed some light on the role
of the parameter $a$ that figures in the weight function $w_a$ in \eqref{defwa}.  Notice that, from the definition of $U_{n,a}$ given in
\eqref{defuna}, we have $\lim_{a\to \infty} U_{n,a} =0$ and $\lim_{a\to 0} U_{n,a} =\infty$, since
$\int \left|\Delta\psi_n(t)-\left(\|t\|^2-d\right)\psi_n(t)\right|^2\, \mbox{d}t=\infty$. Suitable transformations of $U_{n,a}$, however,
yield well-known limit statistics as $a \to 0$ and $a \to \infty$.

\begin{theorem} \label{thmatozero}
Elementwise on the underlying probability space, we have
\begin{equation}\label{limesato0}
\lim_{a\rightarrow 0} \left[\left(\frac{a}{\pi}\right)^{d/2}U_{n,a} - \frac{d(d+2)}{4a^2}\right] = \frac{1}{n}\sum_{j=1}^n \|Y_{n,j}\|^4 - d^2.
\end{equation}
\end{theorem}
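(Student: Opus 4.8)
The natural starting point is the closed-form representation \eqref{repruna}. Multiplying it by $(a/\pi)^{d/2}$ and writing $B_{j,k}(a)$ for the bracketed summand occurring there, we have
\[
\left(\frac{a}{\pi}\right)^{d/2} U_{n,a} = \frac1n \sum_{j,k=1}^n B_{j,k}(a),
\]
a \emph{finite} double sum in which $B_{j,k}(a)$ depends on $a$ only through $\|Y_{n,j}\|^2$, $\|Y_{n,k}\|^2$, the squared distance $\|Y_{n,j}-Y_{n,k}\|^2$, and the factor $\exp(-\|Y_{n,j}-Y_{n,k}\|^2/(4a))$. The plan is to split this sum into its diagonal part ($j=k$) and its off-diagonal part ($j\ne k$) and to pass to the limit $a\to0$ in each part separately, which is legitimate because the sum is finite and we argue elementwise on the probability-one event on which $S_n$ is invertible and $X_1,\dots,X_n$ are pairwise distinct.

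First I would dispose of the off-diagonal terms. Since $\PP^X$ is absolutely continuous and $S_n^{-1/2}$ has full rank, the points $Y_{n,1},\dots,Y_{n,n}$ are pairwise distinct on that event, so $\|Y_{n,j}-Y_{n,k}\|^2>0$ whenever $j\ne k$. Inspecting \eqref{repruna} shows that each off-diagonal $B_{j,k}(a)$ equals a (Laurent) polynomial in $1/a$ multiplied by $\exp(-\|Y_{n,j}-Y_{n,k}\|^2/(4a))$; as $a\to 0$ the exponential decay dominates every power of $1/a$, so $B_{j,k}(a)\to0$, and hence the entire off-diagonal sum tends to $0$.

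For the diagonal terms, setting $\|Y_{n,j}-Y_{n,k}\|^2=0$ turns all exponentials into $1$, and a short simplification of \eqref{repruna} collapses $B_{j,j}(a)$ to
\[
\|Y_{n,j}\|^4 - 2d\,\|Y_{n,j}\|^2 + \frac{d}{a}\,\|Y_{n,j}\|^2 + d^2 - \frac{d^2}{a} + \frac{d(d+2)}{4a^2}.
\]
Averaging over $j$, I would invoke the identity $\frac1n\sum_{j=1}^n Y_{n,j}Y_{n,j}^\top = {\rm I}_d$, which is immediate from $Y_{n,j}=S_n^{-1/2}(X_j-\overline{X}_n)$ together with the definitions of $\overline{X}_n$ and $S_n$, and which yields $\frac1n\sum_{j=1}^n\|Y_{n,j}\|^2 = {\rm tr}\,{\rm I}_d = d$. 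With this, the terms of order $1/a$ cancel, giving $\frac1n\sum_{j=1}^n B_{j,j}(a) = \frac1n\sum_{j=1}^n\|Y_{n,j}\|^4 - d^2 + \frac{d(d+2)}{4a^2}$. Hence
\[
\left(\frac{a}{\pi}\right)^{d/2}U_{n,a} - \frac{d(d+2)}{4a^2} = \left(\frac1n\sum_{j=1}^n\|Y_{n,j}\|^4 - d^2\right) + \frac1n\sum_{j\ne k} B_{j,k}(a),
\]
and letting $a\to0$ yields \eqref{limesato0}.

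The only genuine point is the algebraic simplification of $B_{j,j}(a)$ and the exact cancellation of the $d(d+2)/(4a^2)$ and $1/a$ contributions, which is purely a bookkeeping exercise; the one conceptual ingredient — that an exponential beats a polynomial — is elementary once one records that the scaled residuals are pairwise distinct almost surely. So I do not anticipate any real obstacle, only the care needed to carry the constants through correctly.
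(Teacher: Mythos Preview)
Your argument is correct and follows the same route as the paper: split the double sum from \eqref{repruna} into diagonal and off-diagonal parts, observe that the off-diagonal terms vanish as $a\to0$ because the exponential factor kills every power of $1/a$, simplify the diagonal terms explicitly, and finish using $\sum_{j=1}^n\|Y_{n,j}\|^2=nd$. Your write-up is somewhat more detailed than the paper's (you spell out why the off-diagonal terms vanish and why the trace identity holds), but the method is identical.
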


\begin{proof}
Starting with \eqref{repruna}, $(a/\pi)^{d/2} U_{n,a}$ is, apart from the factor $1/n$, a double sum over $j$ and $k$. Since each summand for which $j\ne k$ vanishes
asymptotically as $a \to 0$, we have
\[
\left(\frac{a}{\pi}\right)^{d/2}U_{n,a}  =  \frac{1}{n} \sum_{j=1}^n \bigg{[} \|Y_{n,j}\|^4 - \frac{d(2a-1)}{a} \, \|Y_{n,j}\|^2 + \frac{d^2(a-1)}{a} + \frac{d(d+2)}{4a^2}\bigg{]} + o(1)
\]
as $a \to 0$, and the result follows from the fact that $\sum_{j=1}^n \|Y_{n,j}\|^2 = nd$.
\end{proof}

Theorem \ref{thmatozero} means that a suitable affine transformation of $U_{n,a}$ has a limit as $a \to 0$, and that this limit is -- apart from the additive constant $d^2$ -- the
time-honored measure of multivariate kurtosis in the sense of Mardia, see \cite{MAR:1970}. The same measure -- without the subtrahend $d^2$ -- shows up as a limit of
$(a/\pi)^{d/2} T_{n,a}$ as $a \to 0$, see Theorem 2.3 of \cite{DEH:2019}. The next result shows that $U_{n,a}$ and $T_{n,a}$, after multiplication with the same
scaling factor, converge to the same limit as $a \to \infty$, cf. Theorem 2.1 of \cite{DEH:2019}.

\begin{theorem} \label{thmatoinfty}
Elementwise on the underlying probability space, we have
\begin{equation}\label{limitmrz}
\lim_{a\rightarrow\infty} \frac{2}{n\pi^{d/2}}a^{d/2+1}U_{n,a} = \frac{1}{n^2} \sum_{j,k=1}^n \|Y_{n,j}\|^2 \|Y_{n,k}\|^2 Y_{n,j}^\top Y_{n ,k}.
\end{equation}
\end{theorem}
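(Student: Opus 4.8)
The strategy is to start from the closed-form representation \eqref{repruna} and perform a Taylor expansion of every exponential factor $\exp(-\|Y_{n,j}-Y_{n,k}\|^2/(4a))$ in powers of $1/a$ as $a\to\infty$, then collect the terms of the appropriate order. Write $z=\|Y_{n,j}-Y_{n,k}\|^2$ for brevity. After multiplying \eqref{repruna} by the scaling factor $2a^{d/2+1}/(n\pi^{d/2})$, the prefactor $(\pi/a)^{d/2}$ cancels against $\pi^{d/2}$ and leaves $2a/n$ times the double sum, so one must identify the coefficient of $1/a$ inside the bracketed expression (any term of order $1$ in $a$, or of order $a$ or higher, either survives trivially or must cancel, and any term of order $1/a^2$ or smaller is killed by the factor $a$ in the limit). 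This is the same mechanism used for Theorem 2.1 of \cite{DEH:2019}, so the bookkeeping should mirror that computation.

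Concretely, I would expand $\exp(-z/(4a)) = 1 - z/(4a) + O(1/a^2)$ and substitute into each of the three groups of summands in \eqref{repruna}. In the first group, $\|Y_{n,j}\|^2\|Y_{n,k}\|^2\exp(-z/(4a))$ contributes $\|Y_{n,j}\|^2\|Y_{n,k}\|^2(1 - z/(4a)) + O(1/a^2)$. In the second group, the factor $\tfrac{1}{4a^2}(z + 2ad(2a-1))\exp(-z/(4a))$ expands, after multiplying out $2ad(2a-1) = 4a^2 d - 2ad$, to $\big(d + \tfrac{z}{4a^2} - \tfrac{d}{2a}\big)(1 - z/(4a)) + O(1/a^2) = d - \tfrac{d}{2a} - \tfrac{dz}{4a} + O(1/a^2)$, so the whole second group gives $-(\|Y_{n,j}\|^2+\|Y_{n,k}\|^2)\big(d - \tfrac{d}{2a} - \tfrac{dz}{4a}\big) + O(1/a^2)$. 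The third group, $\tfrac{1}{16a^4}\exp(-z/(4a))\big(16d^2a^3(a-1) + 4d(d+2)a^2 + z^2 + (8da^2 - 4(d+2)a)z\big)$, expands to $\big(d^2 - \tfrac{d^2}{a} + \tfrac{d(d+2)}{4a^2} + \tfrac{z^2}{16a^4} + \tfrac{dz}{2a^2} - \tfrac{(d+2)z}{4a^3}\big)(1 - z/(4a)) + O(1/a^2) = d^2 - \tfrac{d^2}{a} - \tfrac{d^2 z}{4a} + O(1/a^2)$. Adding the three groups, the order-$1$ terms are $\|Y_{n,j}\|^2\|Y_{n,k}\|^2 - d(\|Y_{n,j}\|^2+\|Y_{n,k}\|^2) + d^2$, which must cancel after the factor-$a$ multiplication unless summed to zero; and indeed, using $\sum_{j}\|Y_{n,j}\|^2 = nd$ one checks $\sum_{j,k}[\|Y_{n,j}\|^2\|Y_{n,k}\|^2 - d(\|Y_{n,j}\|^2+\|Y_{n,k}\|^2) + d^2] = (nd)^2 - 2nd\cdot nd + n^2 d^2 = 0$, so this group contributes nothing in the limit. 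The coefficient of $1/a$ is then $-\tfrac{1}{4}\|Y_{n,j}\|^2\|Y_{n,k}\|^2 z + \tfrac{d}{2}(\|Y_{n,j}\|^2+\|Y_{n,k}\|^2) + \tfrac{d}{4}(\|Y_{n,j}\|^2+\|Y_{n,k}\|^2)z - d^2 - \tfrac{d^2}{4}z$; multiplying by $2a/n$ and letting $a\to\infty$, the surviving limit is $\tfrac{2}{n}\sum_{j,k}$ of this coefficient.

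It remains to simplify that double sum to the claimed right-hand side. Expanding $z = \|Y_{n,j}-Y_{n,k}\|^2 = \|Y_{n,j}\|^2 + \|Y_{n,k}\|^2 - 2Y_{n,j}^\top Y_{n,k}$ and using the identities $\sum_j \|Y_{n,j}\|^2 = nd$ and $\sum_j Y_{n,j} = 0$ (equivalently $\sum_{j,k}Y_{n,j}^\top Y_{n,k} = 0$), all terms not of the form $\|Y_{n,j}\|^2\|Y_{n,k}\|^2 Y_{n,j}^\top Y_{n,k}$ should telescope away, leaving $\tfrac{1}{n^2}\sum_{j,k}\|Y_{n,j}\|^2\|Y_{n,k}\|^2 Y_{n,j}^\top Y_{n,k}$ after absorbing the factors; this matches Theorem 2.1 of \cite{DEH:2019}, since $U_{n,a}$ and $T_{n,a}$ differ at the relevant order only by terms that vanish under these moment constraints. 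The main obstacle is purely the algebraic bookkeeping: one must be careful that the $O(1/a^2)$ remainders are genuinely summable (trivially true, since the sum is finite and $a$-independent apart from the exponentials) and that the order-$1/a$ cancellations among the three groups are carried out without sign errors. I would organize the computation by tabulating, for each group, the coefficients of $a^0$ and $a^{-1}$ separately, then sum columns.
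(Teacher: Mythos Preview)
Your proposal is correct and follows exactly the route the paper intends: the paper's own proof is simply ``follows the lines of the proof of Theorem~2.1 of \cite{DEH:2019} and is thus omitted,'' which is precisely the Taylor-expand-in-$1/a$ bookkeeping you outline, together with the exact identities $\sum_j\|Y_{n,j}\|^2=nd$ and $\sum_j Y_{n,j}=0$ to kill the order-$a^0$ surplus and all cross terms at order $a^{-1}$. One cosmetic slip: the scaling factor in front of the bare double sum $\sum_{j,k}[\cdots]$ is $2a/n^2$, not $2a/n$ (the extra $1/n$ comes from the prefactor already present in \eqref{repruna}); your final expression $\tfrac{1}{n^2}\sum_{j,k}\|Y_{n,j}\|^2\|Y_{n,k}\|^2 Y_{n,j}^\top Y_{n,k}$ is nonetheless correct, so this is only a matter of keeping the exposition consistent.
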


\begin{proof}
The proof follows the lines of the proof of Theorem 2.1 of \cite{DEH:2019} and is thus omitted. \end{proof}

The limit figuring on the right hand side  of \eqref{limitmrz} is a measure of multivariate skewness, introduced by
M\'{o}ri, Rohatgi and Sz\'{e}kely, see \cite{MRS:1993}. Theorem \ref{thmatozero} and Theorem \ref{thmatoinfty} show that
the class of tests for $H_0$ are in a certain sense "closed at the boundaries" $a\to 0$ and $a \to \infty$.
However, in contrast to the test for multivariate normality based on $U_{n,a}$ for {\em fixed} $a \in (0,\infty)$, tests for $H_0$ based
on measures of multivariate skewness and kurtosis lack consistency against general alternatives, see, e.g.,
\cite{BHE:1991,BH:1992,HEN:1994a}.

\section{The limit null distribution of $U_{n,a}$}\label{seclimitnuuld}
In this section, we assume that the distribution of $X$ is some nondegenerate $d$-variate normal law. In view of affine invariance,
we may further assume that $\mathbb{E}(X) = 0$ and $\mathbb{E}(XX^\top) = {\rm I}_d$. By symmetry, it is readily seen that $U_{n,a}$
defined in \eqref{defuna} takes the form
\begin{equation}\label{reprquna}
U_{n,a} = \int_{\mathbb{R}^d} S_n^2(t) w_a(t) \, {\rm d}t,
\end{equation}
where
\begin{equation}\label{defprocsn}
S_n(t) = \frac{1}{\sqrt{n}} \sum_{j=1}^n \big{(} \|Y_{n,j}\|^2 + \|t\|^2 -d \big{)} \big{(} \cos(t^\top Y_{n,j})+ \sin(t^\top Y_{n,j})\big{)}, \quad t \in \mathbb{R}^d.
\end{equation}
In view of \eqref{reprquna}, our setting for asymptotics will be the separable Hilbert space $\mathbb{H}$ of (equivalence classes of)
measurable functions $f: \mathbb{R}^d \rightarrow \mathbb{R}$ that satisfy $\int f^2(t) w_a(t) \, {\rm d}t < \infty$. Here and
in the sequel, each unspecified integral will be over $\mathbb{R}^d$. The scalar product and the norm in $\mathbb{H}$ are given by
$\langle f,g\rangle_\mathbb{H} = \int f(t)g(t) w_a(t)\, {\rm d}t$ and $\|f\|_\mathbb{H} = \langle f,f\rangle_\mathbb{H}^{1/2}$, respectively. Notice that, in this notation,
\eqref{reprquna} takes the form $U_{n,a} = \|S_n\|^2_\mathbb{H}$, where $S_n$ is given in  \eqref{defprocsn}.

Putting $\psi(t) = \exp(-\|t\|^2/2)$ as before, and writing $\vertk$ for convergence in distribution, the main result of this section is as follows.

\begin{theorem} \label{thmlimitnull}
If $X$ has some nondegenerate normal distribution, we have the following:
\begin{enumerate}
\item[a)] There is a centered Gaussian random element ${\cal S}$ of $\mathbb{H}$ having covariance kernel
\begin{eqnarray*}
    K(s, t) &= & \psi(s-t)\Big{\{} 2d + \|s\|^2 \|t\|^2 - 2 s^\top t \|s-t\|^2 -4 \|s-t\|^2\Big{\}}\\
    & & \quad + 2 \psi(s)\psi(t) \Big{\{} 2\|s\|^2 + 2 \|t\|^2 - d - 2 s^\top t - 4 (s^\top t)^2\Big{\}}, \quad s,t \in \mathbb{R}^d,
    \end{eqnarray*}
such that, with $S_n$ defined in \eqref{defprocsn}, $S_n \vertk {\cal S}$ as $n \to \infty$.
\item[b)] We have
\begin{equation}\label{uinftya}
U_{n,a} \vertk \int {\cal S}^2(t) w_a(t) \, {\rm d}t \quad \text{as } n \to \infty.
\end{equation}
\end{enumerate}
\end{theorem}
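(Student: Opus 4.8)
The plan is to establish a functional central limit theorem for the $\HH$-valued random element $S_n$ defined in \eqref{defprocsn}, and then deduce part b) by the continuous mapping theorem, since the map $f \mapsto \|f\|_\HH^2 = \int f^2(t) w_a(t)\,\mathrm{d}t$ is continuous on $\HH$ and $U_{n,a} = \|S_n\|_\HH^2$ by \eqref{reprquna}. Thus the entire content is in part a).

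To prove a), the first step is to replace the scaled residuals $Y_{n,j} = S_n^{-1/2}(X_j - \overline{X}_n)$ by the i.i.d.\ standardized observations $X_j$ (recall $\E X = 0$, $\E XX^\top = \mathrm{I}_d$ under $H_0$ by affine invariance). I would define the ``unstudentized'' process
\[
\widetilde{S}_n(t) = \frac{1}{\sqrt{n}} \sum_{j=1}^n \bigl(\|X_j\|^2 + \|t\|^2 - d\bigr)\bigl(\cos(t^\top X_j) + \sin(t^\top X_j)\bigr),
\]
whose summands are i.i.d.\ with mean zero (one checks $\E[(\|X\|^2 + \|t\|^2 - d)(\cos(t^\top X) + \sin(t^\top X))] = 0$ using the PDE \eqref{PDE} satisfied by $\psi$, namely $\E[\|X\|^2 \cos(t^\top X)] = (d - \|t\|^2)\psi(t)$ and $\E[\|X\|^2 \sin(t^\top X)] = 0$ by symmetry). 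By the Hilbert-space central limit theorem (e.g.\ the CLT in separable Hilbert spaces, once one verifies $\E\|g(\cdot, X)\|_\HH^2 < \infty$ where $g(t,x) = (\|x\|^2 + \|t\|^2 - d)(\cos(t^\top x) + \sin(t^\top x))$, which holds because $\E\|X\|^4 < \infty$ automatically under normality and $w_a$ has exponentially decaying weight), $\widetilde{S}_n \vertk \widetilde{\cal S}$ in $\HH$, where $\widetilde{\cal S}$ is centered Gaussian with covariance kernel $\widetilde K(s,t) = \E[g(s,X)g(t,X)]$. A direct (if tedious) computation of this expectation, expanding the product of cosines and sines into $\cos(s^\top X \pm t^\top X)$, $\sin(\cdot)$ terms and using Gaussian moment formulas together with differentiation of $\psi$ under the expectation, should yield the first bracketed term of $K(s,t)$ in the theorem. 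This is the ``non-studentized'' part of the limit.

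The second, and main, step is to show that the difference $S_n - \widetilde{S}_n$ converges in probability in $\HH$ to a linear correction term arising from the estimation of $\mu$ and $\Sigma$ by $\overline{X}_n$ and $S_n$. Here I would Taylor-expand $g(t, S_n^{-1/2}(X_j - \overline{X}_n))$ around $g(t, X_j)$, using the standard expansions $\overline{X}_n = O_\PP(n^{-1/2})$ and $S_n^{-1/2} = \mathrm{I}_d - \tfrac12(S_n - \mathrm{I}_d) + o_\PP(n^{-1/2})$, so that $Y_{n,j} = X_j - \overline{X}_n - \tfrac12(S_n - \mathrm{I}_d)X_j + (\text{remainder})$. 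Substituting this into $S_n(t)$, collecting the first-order terms, and invoking the multivariate CLT for the joint vector $(n^{-1/2}\sum X_j,\ n^{-1/2}\sum (X_jX_j^\top - \mathrm{I}_d),\ \widetilde{S}_n)$ (which is jointly Gaussian in the limit), one obtains $S_n(t) = \widetilde{S}_n(t) + \langle \text{linear functionals of } \overline{X}_n, S_n - \mathrm{I}_d\rangle + o_\PP(1)$ uniformly enough in the $\HH$-norm. Adding the Gaussian correction term changes the covariance kernel from $\widetilde K$ to $K$; the extra term $2\psi(s)\psi(t)\{2\|s\|^2 + 2\|t\|^2 - d - 2s^\top t - 4(s^\top t)^2\}$ should emerge as the covariance of $\widetilde{\cal S}$ with these linear correction terms plus the variance of the correction itself. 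The bookkeeping here -- controlling the $\HH$-norm of the remainder terms uniformly, and correctly identifying all the covariances between $\widetilde{\cal S}$, $\overline{X}_n$, and $S_n - \mathrm{I}_d$ -- is the hard part; the remainder control relies crucially on the Gaussian tails of $X$ and the exponential weight $w_a$, which make all the relevant integrals of polynomial-times-trigonometric integrands finite.

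Finally, once $S_n \vertk {\cal S}$ in $\HH$ is established with the stated kernel $K$, part b) is immediate: the functional $h \mapsto \int h^2 w_a\,\mathrm{d}t$ is continuous (indeed Lipschitz on bounded sets) from $\HH$ to $\R$, so the continuous mapping theorem gives $U_{n,a} = \|S_n\|_\HH^2 \vertk \|{\cal S}\|_\HH^2 = \int {\cal S}^2(t) w_a(t)\,\mathrm{d}t$, which is \eqref{uinftya}. One should also remark that ${\cal S}$ has almost surely continuous (indeed analytic) sample paths and lies in $\HH$ almost surely, which follows from finiteness of $\int K(t,t) w_a(t)\,\mathrm{d}t$; this is a routine check given the explicit form of $K$ and the decay of $w_a$.
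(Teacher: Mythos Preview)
Your proposal is correct and follows essentially the same route as the paper: Taylor-expand the residuals $Y_{n,j}=X_j+\Delta_{n,j}$, control the remainder in $\HH$-norm via the bounds on $\Delta_{n,j}$ (Proposition~A.1 of \cite{DEH:2019} in the paper), and invoke a Hilbert-space CLT for the resulting i.i.d.\ sum; part b) then follows by the continuous mapping theorem exactly as you say. The only organizational difference is that the paper absorbs the linear correction from estimating $(\mu,\Sigma)$ directly into a single function $h(x,t)$, so that the approximating process $\widetilde S_n(t)=n^{-1/2}\sum_j h(X_j,t)$ is itself an i.i.d.\ sum of centered $\HH$-elements with $\|S_n-\widetilde S_n\|_\HH=o_\PP(1)$, and the kernel drops out as $K(s,t)=\E[h(X,s)h(X,t)]$ without any separate cross-covariance bookkeeping; you instead keep the naive process and the correction separate and recombine them via a joint CLT, which is equivalent but slightly less economical. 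Your guess that the first bracketed term $\psi(s-t)\{\cdots\}$ of $K$ is exactly the covariance of the naive (unstudentized) process and the second bracketed term $2\psi(s)\psi(t)\{\cdots\}$ collects the cross- and correction-variance contributions is in fact correct.
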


\begin{proof} Since the proof is analogous to the proof of Proposition 3.2 of \cite{DEH:2019}, it will only be sketched.
If $S_n^0(t)$ stands for the modification of $S_n(t)$ that results if we replace $Y_{n,j}$ with $X_j$, then a Hilbert space
central limit theorem holds for $S_n^0$, since the summands of $S_n^0$ are square-integrable centered random elements of $\mathbb{H}$.
The idea is thus to find a random element $\widetilde{S}_n$ of $\mathbb{H}$ such that $\widetilde{S}_n \vertk {\cal S}$ and
$\|S_n - \widetilde{S}_n \|_\mathbb{H} = o_\mathbb{P}(1)$. Putting $Y_{n,j} = X_j + \Delta_{n,j}$ in
\eqref{defprocsn} and using the fact that $\cos(t^\top Y_{n,j}) = \cos(t^\top X_j) - \sin(\Theta_j)t^\top \Delta_{n,j}$,
$\sin(t^\top Y_{n,j}) = \sin(t^\top X_j) + \cos(\Gamma_j) t^\top \Delta_{n,j}$, where $\Theta_j, \Gamma_j$ depend on $X_1,\ldots,X_n$
 and $t$ and satisfy $|\Theta_j - t^\top X_j| \le |t^\top \Delta_{n,j}|$,   $|\Gamma_j - t^\top X_j| \le |t^\top \Delta_{n,j}|$,
 some algebra and Proposition A.1 of \cite{DEH:2019} show that a choice of $\widetilde{S}_n$ is given by
 \[
 \widetilde{S}_n(t) = \frac{1}{\sqrt{n}} \sum_{j=1}^n h(X_j,t),
 \]
 where
\begin{eqnarray*}
   h(x,t) & = & \bigl(\|x\|^2 + \|t\|^2 - d\bigr)\bigl(\cos(t^{\top}x) + \sin(t^{\top}x)\bigr) \\
    & & \quad - \psi(t)\Big{\{} 2\|t\|^2 + \|x\|^2 - d + 2t^\top x - 2(t^\top x)^2\Big{\}}.
    \end{eqnarray*}
    Tedious calculations then show that the covariance kernel of ${\cal S}$, which is $\mathbb{E}[h(X,s)h(X,t)]$, is equal to
    $K(s,t)$ given above. \end{proof}

Let $U_{\infty,a}$ denote a random variable having the limit distribution of $U_{n,a}$ given in \eqref{uinftya}. Since the distribution of $U_{\infty,a}$ is that of
$\|{\cal S}\|_\mathbb{H}^2$, where ${\cal S}$ is the Gaussian random element of $\mathbb{H}$ figuring in Theorem \ref{thmlimitnull}, it is the distribution of
$\sum_{j \ge 1} \lambda_jN_j^2$, where $N_1,N_2, \ldots $ is a sequence of i.i.d. standard normal random variables, and $\lambda_1,\lambda_2, \ldots $ are the positive eigenvalues
corresponding to normalized eigenfunctions of the integral operator $f \mapsto Af$ on $\mathbb{H}$,  where
$(Af)(s) = \int K(s,t) f(t) \, w_a(t) \, {\rm d}t$. It seems to be hopeless to obtain closed-form expressions of these eigenvalues. However, in view
of Fubini's theorem,
we have
\[
\mathbb{E}[U_{\infty,a}] = \int \mathbb{E}\big{[}{\cal S}^2(t)\big{]} w_a(t) \, {\rm d}t = \int K(t,t) w_a(t) \, {\rm d}t,
\]
and thus straightforward manipulations of integrals yield the following result.

\begin{theorem}
Putting $\gamma = (a/(a+1))^{d/2}$, we have
\[
  \mathbb{E}[U_{\infty,a}] = 2d \left(\frac{\pi}{a}\right)^{d/2} \bigg{\{} 1 - \gamma + \frac{\gamma}{a+1} - \frac{(d+2)\gamma}{(a+1)^2} + \frac{d+2}{8a^2}\bigg{\}}.
\]
\end{theorem}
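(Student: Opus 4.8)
The starting point is the identity $\mathbb{E}[U_{\infty,a}] = \int K(t,t)\,w_a(t)\,\mathrm{d}t$ recorded just before the statement, which is legitimate because $K(t,t)$ grows only polynomially in $t$ while $w_a(t)=\mathrm{e}^{-a\|t\|^2}$ decays exponentially, so Fubini applies. The plan is therefore to first compute the diagonal $K(t,t)$ explicitly, and then integrate it against $w_a$ using only elementary Gaussian-type moment formulas.

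For the first step I would set $s=t$ in the kernel $K(s,t)$ from Theorem~\ref{thmlimitnull}. Then $\psi(s-t)=\psi(0)=1$, $\|s-t\|^2=0$, $s^\top t=\|t\|^2$, and $\psi(s)\psi(t)=\psi(t)^2=\mathrm{e}^{-\|t\|^2}$, so every term carrying a factor $\|s-t\|^2$ drops out and one is left with
\[
K(t,t) \;=\; 2d+\|t\|^4 \;+\; 2\,\mathrm{e}^{-\|t\|^2}\bigl(2\|t\|^2-d-4\|t\|^4\bigr).
\]
For the second step I would split $\int K(t,t)\,w_a(t)\,\mathrm{d}t$ into the contribution of the polynomial part $2d+\|t\|^4$, integrated against $\mathrm{e}^{-a\|t\|^2}$, and the contribution of the three summands carrying the extra factor $\mathrm{e}^{-\|t\|^2}$, integrated against $\mathrm{e}^{-(a+1)\|t\|^2}$. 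All the required integrals are instances of
\[
\int_{\mathbb{R}^d}\mathrm{e}^{-b\|t\|^2}\,\mathrm{d}t=\Bigl(\tfrac{\pi}{b}\Bigr)^{d/2},\qquad
\int_{\mathbb{R}^d}\|t\|^2\,\mathrm{e}^{-b\|t\|^2}\,\mathrm{d}t=\tfrac{d}{2b}\Bigl(\tfrac{\pi}{b}\Bigr)^{d/2},\qquad
\int_{\mathbb{R}^d}\|t\|^4\,\mathrm{e}^{-b\|t\|^2}\,\mathrm{d}t=\tfrac{d(d+2)}{4b^2}\Bigl(\tfrac{\pi}{b}\Bigr)^{d/2},
\]
valid for $b>0$, the last two being obtained from the first by differentiating twice under the integral sign with respect to $b$. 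Using these with $b=a$ for the polynomial part and $b=a+1$ for the three $\mathrm{e}^{-\|t\|^2}$-terms produces a finite linear combination of $(\pi/a)^{d/2}$ and $(\pi/(a+1))^{d/2}$ times rational functions of $a$.

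The final step is pure bookkeeping: factor $2d\,(\pi/a)^{d/2}$ out of everything, and write $(\pi/(a+1))^{d/2}=\bigl(a/(a+1)\bigr)^{d/2}(\pi/a)^{d/2}=\gamma\,(\pi/a)^{d/2}$ so that $\gamma$ appears. Then the term $2d$ contributes the constant $1$, the term $\|t\|^4$ contributes $\tfrac{d+2}{8a^2}$, and the three terms (after multiplying the bracket by $2$) $-2d\,\mathrm{e}^{-\|t\|^2}$, $4\|t\|^2\mathrm{e}^{-\|t\|^2}$, $-8\|t\|^4\mathrm{e}^{-\|t\|^2}$ contribute $-\gamma$, $\tfrac{\gamma}{a+1}$ and $-\tfrac{(d+2)\gamma}{(a+1)^2}$, respectively, which assembles exactly to the claimed expression. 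There is no conceptual obstacle here; the only thing to watch is the sign and coefficient bookkeeping in evaluating $K(t,t)$ and in collecting the final terms, since an error there would propagate directly into the answer.
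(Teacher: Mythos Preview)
Your proposal is correct and follows exactly the approach the paper indicates: the paper merely says that $\mathbb{E}[U_{\infty,a}]=\int K(t,t)\,w_a(t)\,\mathrm{d}t$ by Fubini and that ``straightforward manipulations of integrals'' give the stated formula, and you have supplied precisely those manipulations. Your evaluation of $K(t,t)$ and the subsequent Gaussian moment integrals are all correct, so there is no gap.
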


\noindent From this result, one readily obtains
\begin{equation}\label{inftyatonull}
\lim_{a \to 0}\bigg{[} \left(\frac{a}{\pi}\right)^{d/2} \mathbb{E}\big{[}U_{\infty,a}\big{]} - \frac{d(d+2)}{4a^2} \bigg{]} = 2d.
\end{equation}
It is interesting to compare this limit relation with \eqref{limesato0}. If the underlying distribution is standard normal, i.e.,
if $\mathbb{P}^X = {\rm N}_d(0,{\rm I}_d)$, we have $\mathbb{E}\|X\|^4 = 2d + d^2$. Now, writing $Y_{n,j} = X_j + \Delta_{n,j}$ and
using Proposition A.1 of \cite{DEH:2019}, the right hand side of \eqref{limesato0} turns out to converge in probability  to
$\mathbb{E}\|X\|^4 - d^2$ as $n \to \infty$, and this expectation is the right hand side of \eqref{inftyatonull}. Regarding the case $a \to \infty$,
the representation of $\mathbb{E}[U_{\infty,a}]$ easily yields
\[
\lim_{a \to \infty} \bigg{[} \frac{2a^{d/2+1}}{\pi^{d/2}} \mathbb{E}[U_{\infty,a}] \bigg{]} = 2d(d+2).
\]
This result corresponds to \eqref{limitmrz}, since, by Theorem 2.2 of \cite{H:1997}, the right hand side of \eqref{limitmrz}, after multiplication
with $n$, converges in distribution to $2(d+2)\chi^2_d$ as $n \to \infty$ if $\mathbb{P}^X = {\rm N}_d(0,{\rm I}_d)$. Here, $\chi^2_d$ is
a random variable having a chi square distribution with $d$ degrees of freedom.

\section{Limits of $U_{n,a}$ under alternatives}\label{seclimitsalt}
In this section we assume that $X,X_1,X_2, \ldots $ are i.i.d., and that $\mathbb{E}\|X\|^4 < \infty$. Moreover, let
$\mathbb{E}(X) =0$ and $\mathbb{E}(XX^\top) = {\rm I}_d$ in view of affine invariance, and recall the Laplace operator $\Delta$
from Section \ref{sec:Intro}. The characteristic function of $X$ will be denoted by $\psi(t) = \mathbb{E}[\exp({\rm i}t^\top X)]$, $t \in \mathbb{R}^d$.  Letting
\[
\psi^\pm(t) = \mathbb{E}[\cos(t^\top X)] \pm \mathbb{E}[\sin(t^\top X)], \quad t \in \mathbb{R}^d,
\]
we first present an almost sure limit for $n^{-1} U_{n,a}$.

\begin{theorem}\label{thmalt1}
We have
\[
\frac{U_{n,a}}{n} \fsk \Gamma_a := \int_{\mathbb{R}^d} z^2(t)  w_a(t) \, {\rm d}t = \|z\|^2_\mathbb{H},
\]
where
\begin{equation}\label{defzvont}
z(t) = -\Delta\psi^+(t) + (\|t\|^2 - d)\psi^+(t).
\end{equation}
\end{theorem}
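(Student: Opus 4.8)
The plan is to recognise $U_{n,a}/n$ as a squared $\mathbb{H}$-norm of an explicit empirical average and then to combine a strong law of large numbers in $\mathbb{H}$ with the passage from the scaled residuals $Y_{n,j}$ to the original observations $X_j$. Set $g(x,t)=(\|x\|^2+\|t\|^2-d)\bigl(\cos(t^\top x)+\sin(t^\top x)\bigr)$ and $Z_n(t)=n^{-1}\sum_{j=1}^n g(Y_{n,j},t)$. Since $\Delta\psi_n(t)-(\|t\|^2-d)\psi_n(t)=-n^{-1}\sum_{j=1}^n(\|Y_{n,j}\|^2+\|t\|^2-d)\exp({\rm i}t^\top Y_{n,j})$, expanding the squared modulus produces a $\cos^2+\sin^2$ part together with a cross term built from products $\cos(t^\top Y_{n,j})\sin(t^\top Y_{n,k})$; the latter, multiplied by the factor $(\|Y_{n,j}\|^2+\|t\|^2-d)(\|Y_{n,k}\|^2+\|t\|^2-d)w_a(t)$, which is even in $t$, is odd in $t$ and integrates to zero over $\mathbb{R}^d$. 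Hence $U_{n,a}/n=\int Z_n^2(t)w_a(t)\,{\rm d}t=\|Z_n\|_{\mathbb{H}}^2$ — this is the same symmetry argument that gives \eqref{reprquna}, now valid for an arbitrary distribution of $X$. On the other side, differentiating under the expectation yields $-\Delta\psi^+(t)+(\|t\|^2-d)\psi^+(t)=\mathbb{E}[g(X,t)]$, so $z=\mathbb{E}[g(X,\cdot)]$ and $\Gamma_a=\|z\|_{\mathbb{H}}^2$. It therefore suffices to prove $\|Z_n-z\|_{\mathbb{H}}\to 0$ almost surely, because continuity of the norm then gives $U_{n,a}/n=\|Z_n\|_{\mathbb{H}}^2\to\|z\|_{\mathbb{H}}^2=\Gamma_a$.

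First I would introduce the de-residualised process $\widetilde Z_n(t)=n^{-1}\sum_{j=1}^n g(X_j,t)$ and show $\widetilde Z_n\to z$ almost surely in $\mathbb{H}$. The map $x\mapsto g(x,\cdot)$ takes values in $\mathbb{H}$ and is continuous, hence Bochner measurable; using $(\cos+\sin)^2\le 2$ one has $\|g(X,\cdot)\|_{\mathbb{H}}^2\le 2\int(\|X\|^2+\|t\|^2-d)^2w_a(t)\,{\rm d}t$, which is a second-degree polynomial in $\|X\|^2$ with finite coefficients, so $\mathbb{E}\|g(X,\cdot)\|_{\mathbb{H}}^2<\infty$ under $\mathbb{E}\|X\|^4<\infty$, and a fortiori $\mathbb{E}\|g(X,\cdot)\|_{\mathbb{H}}<\infty$. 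An application of Fubini's theorem identifies the Bochner mean with $z$, and Mourier's strong law of large numbers for i.i.d.\ random elements of the separable Hilbert space $\mathbb{H}$ yields $\widetilde Z_n\to z$ almost surely.

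The remaining, and main, step is $\|Z_n-\widetilde Z_n\|_{\mathbb{H}}\to 0$ almost surely. Writing $Y_{n,j}=X_j+\Delta_{n,j}$ with $\Delta_{n,j}=(S_n^{-1/2}-{\rm I}_d)X_j-S_n^{-1/2}\overline X_n$, I would bound $|g(Y_{n,j},t)-g(X_j,t)|$ by combining the Lipschitz estimates $|\cos(t^\top Y_{n,j})-\cos(t^\top X_j)|\le\|t\|\,\|\Delta_{n,j}\|$ (and likewise for the sine) with $\bigl|\,\|Y_{n,j}\|^2-\|X_j\|^2\,\bigr|\le\|\Delta_{n,j}\|\bigl(2\|X_j\|+\|\Delta_{n,j}\|\bigr)$. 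This gives, for a fixed polynomial $P$ independent of $j$ and $n$, a bound $|g(Y_{n,j},t)-g(X_j,t)|\le P(\|t\|)\bigl(\|\Delta_{n,j}\|+\|\Delta_{n,j}\|^2\bigr)\bigl(1+\|X_j\|+\|X_j\|^2\bigr)$, and integrating against $w_a$ (with $\int P(\|t\|)^2 w_a(t)\,{\rm d}t<\infty$) yields, for a finite constant $C$,
\[
\|Z_n-\widetilde Z_n\|_{\mathbb{H}}\le C\cdot\frac{1}{n}\sum_{j=1}^n\bigl(\|\Delta_{n,j}\|+\|\Delta_{n,j}\|^2\bigr)\bigl(1+\|X_j\|+\|X_j\|^2\bigr).
\]
Since $\overline X_n\to 0$ and $S_n\to{\rm I}_d$ almost surely (so $\|S_n^{-1/2}-{\rm I}_d\|\to 0$) and $n^{-1}\sum_{j=1}^n\|X_j\|^k$ converges almost surely for $k\le 4$, standard estimates — precisely those packaged in Proposition A.1 of \cite{DEH:2019} — show this average tends to $0$ almost surely. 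Combining the two steps through the triangle inequality finishes the argument.

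I expect the last step to be the main obstacle: it requires controlling a genuine triangular array, keeping all of the $t$-dependence inside a single $w_a$-integrable envelope $P(\|t\|)$, and exploiting that it is exactly the hypothesis $\mathbb{E}\|X\|^4<\infty$ that tames the averages of powers of $\|X_j\|$ and their products with $\|\Delta_{n,j}\|$. The Hilbert-space law of large numbers and the differentiation under the integral sign are routine by comparison.
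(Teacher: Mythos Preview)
Your proof is correct and follows essentially the same route as the paper: both recognise $U_{n,a}/n$ as the squared $\mathbb{H}$-norm of an empirical process, apply the Hilbert-space strong law of large numbers to the version built from the $X_j$, and then use Lipschitz estimates on $\cos$ and $\sin$ together with Proposition~A.1 of \cite{DEH:2019} to absorb the passage from scaled residuals to raw observations. The only cosmetic difference is that the paper splits your $Z_n$ into the two pieces $V_n$ and $W_n$ (corresponding to the $\|Y_{n,j}\|^2$ and $\|t\|^2-d$ parts of $g$) and handles each separately via the inner-product identity and Cauchy--Schwarz, whereas you bound $g(Y_{n,j},t)-g(X_j,t)$ in a single stroke and invoke continuity of the norm.
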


\begin{proof} In what follows, we write ${\rm CS}^\pm(\xi) = \cos(\xi) \pm \sin(\xi)$, and we put $Y_j = Y_{n,j}$, $\Delta_j = \Delta_{n,j}$
for the sake of brevity. From \eqref{reprquna} and \eqref{defprocsn}, we have $n^{-1}U_{n,a} = \|V_n + W_n\|_\mathbb{H}^2$,
where
\[
V_n(t) = \frac{1}{n} \sum_{j=1}^n \|Y_j\|^2 {\rm CS}^+(t^\top Y_j), \quad W_n(t) = (\|t\|^2-d)\,  \frac{1}{n} \sum_{j=1}^n {\rm CS}^+(t^\top Y_j).
\]
Putting
\[
V_n^0(t) = \frac{1}{n}\sum_{j=1}^n \|X_j\|^2 {\rm CS}^+(t^\top X_j), \quad W_n^0(t) = (\|t\|^2-d) \, \frac{1}{n}\sum_{j=1}^n {\rm CS}^+(t^\top X_j),
\]
the strong law of large numbers in Hilbert spaces (see, e.g., Theorem 7.7.2 of \cite{HEU:2015}) yields $\|V_n^0+ W_n^0\|_\mathbb{H}^2 \fsk \Gamma_a$ as $n \to \infty$, and thus it suffices to prove
 $\|V_n+ W_n\|_\mathbb{H}^2 - \|V_n^0+ W_n^0\|_\mathbb{H}^2 \fsk 0$. From
 \[
 \|V_n+ W_n\|_\mathbb{H}^2 - \|V_n^0+ W_n^0\|_\mathbb{H}^2 = \big{\langle} V_n-V_n^0+W_n-W_n^0,V_n+W_n + V_n^0+W_n^0\big{\rangle}_\mathbb{H},
 \]
 the Cauchy--Schwarz inequality, the fact that $|V_n(t)| \le 2d$, $\max(|W_n(t)|,|W_n^0(t)|) \le 2(d+\|t\|^2)$, $|V_n^0(t)| \le 2n^{-1}\sum_{j=1}^n \|X_j\|^2$
 and Minkowski's inequality, it suffices to prove $\|V_n-V_n^0\|_\mathbb{H} \fsk 0$ and $\|W_n - W_n^0\|_\mathbb{H} \fsk 0$ as $n \to \infty$. As for
$W_n-W_n^0$, the inequalities $|\cos(t^\top Y_j)- \cos(t^\top X_j)| \le \|t\|\, \|\Delta_j\|$,
$|\sin(t^\top Y_j)- \sin(t^\top X_j)| \le \|t\|\, \|\Delta_j\|$ and the Cauchy--Schwarz inequality yield $|W_n(t)-W_n^0(t)| \le (\|t\|^2 + d) 2 \|t\| (n^{-1}\sum_{j=1}^n \|\Delta_j\|^2)^{1/2}$.
In view of Proposition A.1 b) of \cite{DEH:2019}, we have $\|W_n - W_n^0\|_\mathbb{H} \fsk 0$. Regarding $V_n-V_n^0$, we decompose this difference according to
\[
V_n(t) - V_n^0(t) = \frac{1}{n}\sum_{j=1}^n (\|Y_j\|^2\! -\! \|X_j\|^2){\rm CS}^+(t^\top Y_j) + \frac{1}{n}\sum_{j=1}^n \|X_j\|^2 \big{(}{\rm CS}^+(t^\top Y_j)\! - \! {\rm CS}^+(t^\top X_j)\big{)}.
\]
The squared norm in $\mathbb{H}$ of the second summand on the right hand side converges to zero almost surely, see the treatment of $U_{n,1}$ in the proof of Theorem 6.1 of
\cite{DEH:2019}.  The same holds for the first summand, since its modulus is bounded from above by $4\|t\| n^{-1}\sum_{j=1}^n \|\Delta_j\| + 2n^{-1}\sum_{j=1}^n \|\Delta_j\|^2$,
and the inequality $n^{-1}\sum_{j=1}^n \|\Delta_j\| \le (n^{-1}\sum_{j=1}^n \|\Delta_j\|^2)^{1/2}$, together with
Proposition A.1 b) of \cite{DEH:2019}, yield the assertion.
\end{proof}

Since, under the conditions of Theorem \ref{thmalt1},  $\Gamma_a$ is strictly positive if the underlying distribution does not belong to
${\cal N}_d$, $U_{n,a}$ converges almost surely to $\infty$ under such an alternative, and we have the following result.

\begin{corollary} The test which reject the hypothesis $H_0$ for large values of $U_{n,a}$ is consistent against
each fixed alternative satisfying $\mathbb{E}\|X\|^4 < \infty$.
\end{corollary}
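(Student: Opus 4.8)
The plan is to reduce the corollary to two ingredients: boundedness of the critical values of the test, and divergence of $U_{n,a}$ under the alternative. I would let $c_{n,\alpha}$ denote the $(1-\alpha)$-quantile of the distribution of $U_{n,a}$ under $H_0$; by the affine invariance of $U_{n,a}$ this value does not depend on the particular member of $\mathcal{N}_d$ and is therefore well defined, and the test rejects precisely when $U_{n,a}>c_{n,\alpha}$. Consistency then amounts to showing $\mathbb{P}(U_{n,a}>c_{n,\alpha})\to 1$ under any fixed alternative with $\mathbb{E}\|X\|^4<\infty$.

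For the critical values I would invoke Theorem \ref{thmlimitnull}\,b): under $H_0$ one has $U_{n,a}\vertk U_{\infty,a}$, where $U_{\infty,a}$ has the law of $\sum_{j\ge1}\lambda_jN_j^2$ with all $\lambda_j>0$ and $N_1,N_2,\ldots$ i.i.d.\ standard normal. This limit distribution is continuous and unbounded, so its $(1-\alpha)$-quantile $c_\alpha$ is finite and a continuity point of the limiting distribution function; weak convergence then yields $c_{n,\alpha}\to c_\alpha<\infty$, and in particular the sequence $(c_{n,\alpha})_{n\ge1}$ is bounded. (Alternatively, one may simply note that $(c_{n,\alpha})$ is bounded because the limit law is a proper probability distribution.)

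The substantive step is to show $U_{n,a}\to\infty$ almost surely under the alternative. By Theorem \ref{thmalt1}, $n^{-1}U_{n,a}\fsk\Gamma_a=\|z\|_{\mathbb{H}}^2$ with $z$ as in \eqref{defzvont}, so it suffices to prove that $\Gamma_a>0$ whenever $\mathbb{P}^X\notin\mathcal{N}_d$; then $U_{n,a}=n\cdot(n^{-1}U_{n,a})$ diverges almost surely, and together with the boundedness of $c_{n,\alpha}$ this finishes the proof. I would establish $\Gamma_a>0$ by contraposition. Since $z$ in \eqref{defzvont} is continuous (here $\mathbb{E}\|X\|^2<\infty$ legitimises the Laplacian) and the weight $w_a$ in \eqref{defwa} is strictly positive on $\mathbb{R}^d$, the vanishing $\Gamma_a=\int z^2w_a=0$ forces $z\equiv0$, i.e.\ $\Delta\psi^+(t)=(\|t\|^2-d)\psi^+(t)$ for all $t\in\mathbb{R}^d$. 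As $\psi^+(0)=\mathbb{E}[\cos0]+\mathbb{E}[\sin0]=1$, the bounded function $\psi^+$ solves the characterizing boundary value problem \eqref{PDE}, whence $\psi^+(t)=\exp(-\|t\|^2/2)$ by Theorem 1.1 of \cite{DEH:2019}. Writing $\psi^+=u+v$ with the even part $u(t)=\mathbb{E}[\cos(t^\top X)]$ and the odd part $v(t)=\mathbb{E}[\sin(t^\top X)]$, evaluating the identity $\psi^+(t)=\exp(-\|t\|^2/2)$ at both $t$ and $-t$ and using the even/odd symmetry of $u$ and $v$ gives $u(t)=\exp(-\|t\|^2/2)$ and $v\equiv0$. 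Hence the characteristic function of $X$ equals $u+{\rm i}v=\exp(-\|t\|^2/2)$, so $\mathbb{P}^X={\rm N}_d(0,{\rm I}_d)\in\mathcal{N}_d$, contradicting the assumption that $\mathbb{P}^X$ is an alternative.

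The only genuinely non-mechanical point is the positivity of $\Gamma_a$ under alternatives, and its proof hinges on applying the PDE-uniqueness Theorem 1.1 of \cite{DEH:2019} to $\psi^+$ and then peeling off the real and imaginary parts to recover the true characteristic function of $X$; the remaining ingredients — affine invariance for the critical values and the passage from $n^{-1}U_{n,a}\to\Gamma_a>0$ to $U_{n,a}\to\infty$ — are routine.
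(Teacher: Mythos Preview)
Your argument is correct and follows the same skeleton as the paper, which merely asserts in the sentence preceding the corollary that $\Gamma_a>0$ whenever $\mathbb{P}^X\notin\mathcal{N}_d$ and deduces $U_{n,a}\to\infty$ almost surely; you supply the details the paper omits, both for the boundedness of the critical values via Theorem~\ref{thmlimitnull} and for the positivity of $\Gamma_a$ via the PDE characterization.

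One small caveat is worth flagging: you invoke Theorem~1.1 of \cite{DEH:2019} for $\psi^+$, which is not itself a characteristic function, so depending on whether that theorem asserts uniqueness among characteristic functions or among bounded functions your direct application may be formally unjustified. The repair is already implicit in your even/odd step, just reorder it: from $z\equiv0$ split into even and odd parts to get $\Delta u=(\|t\|^2-d)u$ and $\Delta v=(\|t\|^2-d)v$ separately, then add the first to ${\rm i}$ times the second to obtain $\Delta\psi=(\|t\|^2-d)\psi$ for the genuine characteristic function $\psi=u+{\rm i}v$ with $\psi(0)=1$, to which Theorem~1.1 applies without qualification and yields $\psi(t)=\exp(-\|t\|^2/2)$ directly.
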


The next result, which corresponds to Theorem 6.4 of \cite{DEH:2019}, shows that the (population) measure
of multivariate skewness in the sense of M\'{o}ri, Rohatgi and Sz\'{e}kely  emerges as the limit
of $\Gamma_a$, after a suitable scaling, as $a\to \infty$.

\begin{theorem} Under the condition $\mathbb{E}\|X\|^6 < \infty$, we have
\[
\lim_{a\rightarrow\infty} 2a\left(\frac{a}{\pi}\right)^{d/2}\Gamma_a = \left\|\mathbb{E}\left(\|X\|^2X\right)\right\|^2.
\]
\end{theorem}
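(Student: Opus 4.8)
The plan is to identify the leading-order behaviour of the integrand in $\Gamma_a = \|z\|_\mathbb{H}^2$ as $a \to \infty$, where $z(t) = -\Delta\psi^+(t) + (\|t\|^2-d)\psi^+(t)$ and $\psi^+(t) = \mathbb{E}[\cos(t^\top X)] + \mathbb{E}[\sin(t^\top X)]$. The point is that since the weight $w_a(t) = \exp(-a\|t\|^2)$ concentrates mass near the origin as $a \to \infty$, only the behaviour of $z(t)$ for small $t$ matters. So first I would Taylor-expand $\psi^+$ around $t = 0$. Using $\mathbb{E}(X) = 0$ and $\mathbb{E}(XX^\top) = \mathrm{I}_d$, we have $\mathbb{E}[\cos(t^\top X)] = 1 - \tfrac12\|t\|^2 + O(\|t\|^4)$ and $\mathbb{E}[\sin(t^\top X)] = t^\top\mathbb{E}(X) - \tfrac{1}{6}\mathbb{E}[(t^\top X)^3] + O(\|t\|^5) = -\tfrac16\mathbb{E}[(t^\top X)^3] + O(\|t\|^5)$. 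The key observation is that the polynomial part $1 - \tfrac12\|t\|^2$ of $\mathbb{E}[\cos(t^\top X)]$ — which is exactly what the first two terms of the expansion of $\psi(t) = \exp(-\|t\|^2/2)$ look like — is annihilated by the operator $f \mapsto -\Delta f + (\|t\|^2-d)f$ up to higher order, because $\psi$ itself is the exact solution of the characterizing PDE \eqref{PDE}. Hence the lowest-order surviving contribution to $z(t)$ comes from the cubic term $-\tfrac16\mathbb{E}[(t^\top X)^3]$ in $\mathbb{E}[\sin(t^\top X)]$.

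Concretely, I would compute $z(t)$ applied to the cubic monomial contribution. Write $g(t) = -\tfrac16\mathbb{E}[(t^\top X)^3]$, a homogeneous cubic polynomial in $t$. Then $\Delta g(t)$ is a linear form in $t$: a direct computation gives $\Delta\big(\mathbb{E}[(t^\top X)^3]\big) = 6\,\mathbb{E}[(t^\top X)\|X\|^2] = 6\,t^\top \mathbb{E}(\|X\|^2 X)$, so $-\Delta g(t) = t^\top \mathbb{E}(\|X\|^2 X)$. The term $(\|t\|^2 - d)g(t)$ is of order $\|t\|^3$ and $\|t\|^5$, hence negligible compared to the linear term after integrating against $w_a$ and rescaling. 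Likewise, the $O(\|t\|^4)$ remainder in $\mathbb{E}[\cos(t^\top X)]$ and its Laplacian contribute at strictly higher order. Therefore $z(t) = t^\top \mathbb{E}(\|X\|^2 X) + r(t)$ where $r(t) = O(\|t\|^3)$ near the origin (and $z$ is bounded on the whole space under $\mathbb{E}\|X\|^4 < \infty$, which controls the tail). Squaring, $z^2(t) = \big(t^\top \mathbb{E}(\|X\|^2X)\big)^2 + O(\|t\|^4)$ near $0$.

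Then I would integrate: $\Gamma_a = \int \big(t^\top \mathbb{E}(\|X\|^2X)\big)^2 w_a(t)\,{\rm d}t + \int \big(2 t^\top\mathbb{E}(\|X\|^2X)\, r(t) + r^2(t)\big) w_a(t)\,{\rm d}t$. The leading integral is a Gaussian moment: with $b := \mathbb{E}(\|X\|^2 X)$, $\int (t^\top b)^2 e^{-a\|t\|^2}\,{\rm d}t = (\pi/a)^{d/2}\cdot \tfrac{1}{2a}\|b\|^2$, so $2a(a/\pi)^{d/2}$ times this is exactly $\|b\|^2 = \|\mathbb{E}(\|X\|^2X)\|^2$. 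For the remainder, splitting the integral over $\{\|t\|\le\delta\}$ and $\{\|t\|>\delta\}$: on the ball the cubic bound on $r$ gives a contribution of order $\int_{\|t\|\le\delta}(\|t\|^4 + \|t\|^6)w_a(t)\,{\rm d}t = O(a^{-d/2-2})$, which vanishes after multiplication by $2a(a/\pi)^{d/2}$; on the complement, boundedness of $z$ plus $w_a(t) \le e^{-a\delta^2/2}w_{a/2}(t)$ gives exponential decay. The condition $\mathbb{E}\|X\|^6 < \infty$ is what is needed to make the Taylor expansion of $\mathbb{E}[\sin(t^\top X)]$ valid to the cubic order with an integrable $O(\|t\|^5)$ (actually $O(\|t\|^4)$-type) remainder, i.e. to justify differentiating under the expectation and bounding $r$.

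The main obstacle is the bookkeeping of the remainder term: one must verify that every piece of $z(t)$ other than the single linear form $t^\top\mathbb{E}(\|X\|^2X)$ — namely the quartic-and-higher part of $-\Delta\mathbb{E}[\cos(t^\top X)] + (\|t\|^2-d)\mathbb{E}[\cos(t^\top X)]$, and the $O(\|t\|^3)$-and-higher part of $-\Delta\mathbb{E}[\sin(t^\top X)] + (\|t\|^2-d)\mathbb{E}[\sin(t^\top X)]$ — really is $O(\|t\|^3)$ uniformly near the origin and bounded globally, so that its integral against $w_a$ is $o(a^{-d/2-1})$. Once the correct Taylor expansion with a controlled remainder is in hand (using $\mathbb{E}\|X\|^6<\infty$ and the fact that $\psi$ solves \eqref{PDE}), the rest is a routine Gaussian moment computation.
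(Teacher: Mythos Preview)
Your Taylor-expansion approach is correct in outline and genuinely different from the paper's proof. The paper proceeds by writing each factor in $z^2$ as an expectation over independent copies $Y,Z$ of $X$ via identities such as $\psi^+(t)^2 = \mathbb{E}[{\rm CS}^+(t^\top Y){\rm CS}^+(t^\top Z)]$, applies the addition formulas to reduce everything to integrals of the form $\int (\|t\|^2-d)^k\cos\bigl(t^\top(Y-Z)\bigr)w_a(t)\,{\rm d}t$, evaluates these in closed form using \eqref{imprel}, and only then lets $a\to\infty$ inside the expectation by dominated convergence (this is where $\mathbb{E}\|X\|^6<\infty$ is used). Your route is more conceptual --- it explains \emph{why} the skewness quantity $\|\mathbb{E}(\|X\|^2X)\|^2$ appears, namely as the square of the linear part of $z$ at the origin --- whereas the paper's route is purely computational but produces exact intermediate expressions.

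Two claims in your write-up are wrong, though neither breaks the argument. First, $r(t)$ is \emph{not} $O(\|t\|^3)$: the cosine contribution to $z$, namely $\mathbb{E}\bigl[(\|X\|^2+\|t\|^2-d)\cos(t^\top X)\bigr]$, has quadratic part $\|t\|^2 - \tfrac12\, t^\top\mathbb{E}\bigl[(\|X\|^2-d)XX^\top\bigr]t$, which is generically nonzero under alternatives (it vanishes exactly when the fourth moments are Gaussian). The Laplacian of the $O(\|t\|^4)$ tail of the cosine likewise lands at order $\|t\|^2$, not higher. So $r(t)=O(\|t\|^2)$ near $0$. This still suffices: the cross term $(t^\top b)\,r(t)$ is then $O(\|t\|^3)$, with $\int\|t\|^3 w_a = O(a^{-(d+3)/2})$, and $r^2=O(\|t\|^4)$ gives $O(a^{-(d+4)/2})$; both are $o(a^{-d/2-1})$ after multiplying by $2a(a/\pi)^{d/2}$. (Cleaner: split $z=z_{\rm odd}+z_{\rm even}$ by sine/cosine; then $\int z_{\rm odd}z_{\rm even}\,w_a=0$ exactly by parity, and only $z_{\rm odd}$ carries the linear term.) Second, $z$ is not bounded: from $|\cos+\sin|\le 2$ one only gets $|z(t)|\le 2(2d+\|t\|^2)$, i.e.\ quadratic growth. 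This is still harmless for the tail, since $\int_{\|t\|>\delta}(1+\|t\|^2)^2 e^{-a\|t\|^2}\,{\rm d}t$ decays like $e^{-a\delta^2/2}$ times a power of $a^{-1}$.
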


\begin{proof} By definition,
\begin{eqnarray*}
        \Gamma_a  \! & \! = \! & \! \int (\|t\|^2\! -\! d)^2\psi^+(t)^2 w_a(t)  {\rm d}t - 2\! \int\! (\|t\|^2\! -\!  d)\psi^+(t)\Delta \psi^+(t)w_a(t) \, {\rm d}t
            + \int \! (\Delta\psi^+(t))^2w_a(t)  {\rm d}t\\
             \! & \! = \! & \! \Gamma_{a,1} + \Gamma_{a,2} + \Gamma_{a,3} \text{ (say)}.
\end{eqnarray*}
In what follows, let $Y,Z$ be independent copies of $X$. Since $\psi^+(t)^2 = \mathbb{E}[{\rm CS}^+(t^\top Y){\rm CS}^+(t^\top Z)]$, the
addition theorems for the cosine and the sine function and symmetry yield
\[
\Gamma_{a,1} = \mathbb{E}\bigg{[}\int (\|t\|^2 - d)^2\cos\bigl(t^{\top}(Y - Z)\bigr)w_a(t) \, {\rm d}t\bigg{]}.
\]
Putting $c= Y-Z$, display \eqref{imprel} then gives
\begin{align*}
    \Gamma_{a,1} &= \left(\frac{\pi}{a}\right)^{d/2}\frac{1}{16a^4}\mathbb{E}\biggl[\biggl(16d^2a^3(a-1) + 4d(d+2)a^2 + \|Y - Z\|^4 \\
    &\hspace{3.2cm}+ (8da^2 - 4(d+2)a)\|Y - Z\|^2\biggr)\exp\left(-\frac{\|Y - Z\|^2}{4a} \right)\biggr].
\end{align*}
Likewise, it follows that $\psi^+(t)\Delta \psi^+(t) = - \mathbb{E}[\|Y\|^2 \cos(t^\top(Y-Z))]$, whence
\begin{eqnarray*}
    \Gamma_{a,2} & = & 2\mathbb{E}\bigg{[} \|Y\|^2\int (\|t\|^2 - d)\cos\bigl(t^{\top}(Y - Z)\bigr)w_a(t) \, {\rm d}t\bigg{]} \\
    & = & -2\left(\frac{\pi}{a}\right)^{d/2}\mathbb{E}\biggl[\|Y\|^2\left(\frac{\|Y - Z\|^2}{4a^2} + d - \frac{d}{2a}\right)\exp\left(-\frac{\|Y - Z\|^2}{4a}\right)\biggr].
\end{eqnarray*}

\noindent Finally,
\[
\Gamma_{a,3} = \left(\frac{\pi}{a}\right)^{d/2}\mathbb{E}\left[\|Y\|^2\|Z\|^2\exp\left(-\frac{\|Y - Z\|^2}{4a}\right)\right],
\]
and it follows that
\begin{align*}
    2a\left(\frac{a}{\pi}\right)^{d/2}\Gamma_a &= 2a\mathbb{E}\biggl[\|Y\|^2\|Z\|^2\exp\left(-\frac{\|Y - Z\|^2}{4a}\right)\biggr] \\
    &-4a\mathbb{E}\biggl[\|Y\|^2\left(\frac{\|Y - Z\|^2}{4a^2} + d - \frac{d}{2a}\right)\exp\left(-\frac{\|Y - Z\|^2}{4a}\right)\biggr] \\
    &+ \frac{1}{8a^3}\mathbb{E}\biggl[\biggl(16d^2a^3(a-1) + 4d(d+2)a^2 + \|Y - Z\|^4 \\
    &\hspace{2cm}+ (8da^2 - 4(d+2)a)\|Y - Z\|^2\biggr)\exp\left(-\frac{\|Y - Z\|^2}{4a}\right)\biggr].
\end{align*}
Now,  dominated convergence yields
\begin{eqnarray*}
    2a\left(\frac{a}{\pi}\right)^{d/2}\Gamma_a & = &  2ad^2 - \frac{1}{2} \mathbb{E}\big{[} \|Y\|^2 \|Z\|^2 \|Y-Z\|^2\big{]} - 4ad^2 + d \mathbb{E} \big{[} \|Y\|^2 \|Y-Z\|^2\big{]}\\
    & & \    + 2d^2 + 2d^2(a-1) -d^3 + o(1)
    \end{eqnarray*}
    as $a \to \infty$. Since $\mathbb{E}\|Y\|^2 = d = \mathbb{E}\|Z\|^2$ and $\mathbb{E}(Y) = \mathbb{E}(Z) =0$, we have
   \[
\mathbb{E}\big{[} \|Y\|^2 \|Z\|^2 \|Y-Z\|^2\big{]} = 2d \mathbb{E}\|Y\|^4 - 2 \mathbb{E}\big{\|}\|X\|^2 X\big{\|}^2, \quad
\mathbb{E} \big{[} \|Y\|^2 \|Y-Z\|^2\big{]} = \mathbb{E}\|Y\|^4 + d^2,
   \]
    and the assertion follows.
\end{proof}

We close this section with a result on the asymptotic normality of $U_{n,a}$ under fixed alternatives.
 That such a result holds in principle
   follows from  Theorem 1 of \cite{BEH:2017}. To state the main idea, write again ${\rm CS}^\pm(\xi) = \cos(\xi) \pm \sin(\xi)$
   and notice that, by \eqref{reprquna}, $U_{n,a} = \|{\cal S}_n\|_\mathbb{H}^2$, where ${\cal S}_n(t)$ is given in
   \eqref{defprocsn}. Putting
   \[
   {\cal S}_n^*(t) = \frac{{\cal S}_n(t) }{\sqrt{n}} = \frac{1}{n}\sum_{j=1}^n
   \big{(} \|Y_{n,j}\|^2 + \|t\|^2 -d \big{)} {\rm CS}^+(t^\top Y_{n,j}), \quad t \in \mathbb{R}^d,
   \]
Theorem \ref{thmalt1} and \eqref{defzvont} show that
\begin{eqnarray}\nonumber
\sqrt{n}\left(\frac{U_{n,a}}{n}- \Gamma_a\right) & = & \sqrt{n}\big{(} \|{\cal S}_n^*\|_\mathbb{H}^2 - \|z\|^2 \big{)} = \sqrt{n}\langle
{\cal S}_n^*-z,{\cal S}_n^*+z \rangle_\mathbb{H}\\ \nonumber
& = & \sqrt{n} \langle {\cal S}_n^*-z, 2z + {\cal S}_n^*-z \rangle_\mathbb{H}\\ \label{vnstern}
& = & 2 \langle {\cal V}_n^* ,z\rangle_\mathbb{H} + \frac{1}{\sqrt{n}} \|{\cal V}_n^*\|_\mathbb{H}^2,
\end{eqnarray}
where ${\cal V}_n^*(t) = \sqrt{n}({\cal S}_n^*(t)-z(t))$, $t \in \mathbb{R}^d$. In the sequel, let $\nabla(f)(t)$ denote the gradient of a differentiable
function $f:\mathbb{R}^d \rightarrow \mathbb{R}$, evaluated at $t$, and write
${\rm H}f(t)$ for the Hessian matrix of $f$ at $t$ if $f$ is  twice continuously differentiable.
By proceeding as in the proof of Theorem 3.3 of \cite{DEH:2019}, there is a centered Gaussian element ${\cal V}^*$ of $\mathbb{H}$ having covariance kernel
\[
K^*(s,t)  =  \mathbb{E}\big{[} h^*(X,s)h^*(X,t)\big{]}, \qquad s,t \in \mathbb{R}^d,
\]
where
\begin{eqnarray*}
h^*(x,t) \! & \!  = \! & \! \big{(}\|x\|^2 \! + \!  \|t\|^2\! -\! d\big{)} {\rm CS}^+(x,t)
+ \left(\frac{1}{2}\nabla\Delta\psi^+(t)^{\top}\! -\!  \frac{1}{2}(\|t\|^2\! -\! d)\nabla\psi^+(t)^{\top}\! \right) \!(xx^{\top}\! -\!  {\rm I}_d)t \\
    \! & \!  \! & \!  \quad + 2\nabla\psi^-(t)^{\top}x \! +\!  \bigl(\Delta\psi^-(t) \! - \!  (\|t\|^2 - d)\psi^-(t)\bigr)t^{\top}x
     \! + \! x^{\top} {\rm H} \psi^+(t)x \! -\!  (\|t\|^2-d)\psi^+(t),
\end{eqnarray*}
such that ${\cal V}_n^* \vertk {\cal V}^*$ as $n \to \infty$. In view of \eqref{vnstern} and the fact that the distribution of $2\langle {\cal V}^*,z\rangle_\mathbb{H}$ is
centered normal, we have the following result.

\begin{theorem}\label{thmaltnormal}
Under the standing assumptions stated at the beginning of this section, we have
\[
\sqrt{n}\left(\frac{U_{n,a}}{n}- \Gamma_a\right) \vertk {\rm N}(0,\sigma_a^2),
\]
where
\[
\sigma_a^2 = 4 \int_{\mathbb{R}^d} \int_{\mathbb{R}^d} K^*(s,t) z(s)z(t) w_a(s)w_a(t) \, {\rm d}s {\rm d}t.
\]
\end{theorem}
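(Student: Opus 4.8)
The plan is to establish the claimed central limit theorem by reducing, via the decomposition \eqref{vnstern}, the analysis of $\sqrt{n}(U_{n,a}/n - \Gamma_a)$ to that of the linear term $2\langle {\cal V}_n^*, z\rangle_\mathbb{H}$. First I would argue that the remainder $\tfrac{1}{\sqrt{n}}\|{\cal V}_n^*\|_\mathbb{H}^2$ is $o_\mathbb{P}(1)$: since ${\cal V}_n^* \vertk {\cal V}^*$ in $\mathbb{H}$ (a fact granted by the argument preceding the theorem, patterned on the proof of Theorem 3.3 of \cite{DEH:2019}), the continuous mapping theorem gives $\|{\cal V}_n^*\|_\mathbb{H}^2 \vertk \|{\cal V}^*\|_\mathbb{H}^2$, so that $\|{\cal V}_n^*\|_\mathbb{H}^2 = O_\mathbb{P}(1)$ and division by $\sqrt{n}$ kills it. Hence $\sqrt{n}(U_{n,a}/n - \Gamma_a) = 2\langle {\cal V}_n^*, z\rangle_\mathbb{H} + o_\mathbb{P}(1)$.

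Next, the map $g \mapsto \langle g, z\rangle_\mathbb{H}$ is a bounded linear functional on $\mathbb{H}$ (bounded because $\|z\|_\mathbb{H} = \Gamma_a^{1/2} < \infty$ by Theorem \ref{thmalt1}), hence continuous, so ${\cal V}_n^* \vertk {\cal V}^*$ together with the continuous mapping theorem yields $2\langle {\cal V}_n^*, z\rangle_\mathbb{H} \vertk 2\langle {\cal V}^*, z\rangle_\mathbb{H}$. Since ${\cal V}^*$ is a centered Gaussian element of $\mathbb{H}$, the real-valued random variable $2\langle {\cal V}^*, z\rangle_\mathbb{H}$ is centered normal; combining with Slutsky's lemma and the previous paragraph gives $\sqrt{n}(U_{n,a}/n - \Gamma_a) \vertk {\rm N}(0,\sigma_a^2)$. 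It remains to identify the variance. Writing ${\cal V}^*(t) = \int \cdot$ in terms of its defining Gaussian structure, or more directly using that ${\cal V}^*$ arises as the weak limit of $n^{-1/2}\sum_{j=1}^n(h^*(X_j,\cdot) - \mathbb{E} h^*(X,\cdot))$ with covariance kernel $K^*(s,t) = \mathbb{E}[h^*(X,s)h^*(X,t)]$, one computes
\[
\mathrm{Var}\bigl(2\langle {\cal V}^*, z\rangle_\mathbb{H}\bigr) = 4\,\mathbb{E}\Bigl[\Bigl(\int {\cal V}^*(t)z(t)w_a(t)\,{\rm d}t\Bigr)^2\Bigr] = 4\int\int K^*(s,t)z(s)z(t)w_a(s)w_a(t)\,{\rm d}s\,{\rm d}t,
\]
where the interchange of expectation and integration is justified by Fubini's theorem, using $\int K^*(t,t)w_a(t)\,{\rm d}t < \infty$ (finiteness of the trace, which follows from $\mathbb{E}\|X\|^4 < \infty$ and the polynomial-times-Gaussian form of $w_a$) and Cauchy--Schwarz. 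This is exactly $\sigma_a^2$ as stated.

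The main obstacle is the tightness / weak-convergence claim ${\cal V}_n^* \vertk {\cal V}^*$ in $\mathbb{H}$, i.e. filling in the step "by proceeding as in the proof of Theorem 3.3 of \cite{DEH:2019}." This requires expanding ${\cal S}_n^*(t) - z(t)$ after substituting $Y_{n,j} = X_j + \Delta_{n,j}$, controlling the random perturbations $\Delta_{n,j} = (S_n^{-1/2} - {\rm I}_d)X_j - S_n^{-1/2}\overline{X}_n$ via Proposition A.1 of \cite{DEH:2019}, Taylor-expanding the trigonometric terms to first order, and showing that the resulting approximating sum $n^{-1/2}\sum_j(h^*(X_j,\cdot) - \mathbb{E} h^*(X,\cdot))$ differs from ${\cal V}_n^*$ by a term that is $o_\mathbb{P}(1)$ in $\mathbb{H}$-norm; the Hilbert-space CLT (as in Theorem 7.7.2 of \cite{HEU:2015}) then applies to the approximating sum since $h^*(X,\cdot)$ is a centered square-integrable random element of $\mathbb{H}$ under $\mathbb{E}\|X\|^4 < \infty$. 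Since the theorem statement already cites this derivation as established, I would present the proof at the level of the three paragraphs above, treating ${\cal V}_n^* \vertk {\cal V}^*$ as given and focusing on the clean reduction via \eqref{vnstern}, the negligibility of the quadratic remainder, and the Fubini computation of $\sigma_a^2$.
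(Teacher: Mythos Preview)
Your proposal is correct and follows essentially the same route as the paper: the decomposition \eqref{vnstern}, the weak convergence ${\cal V}_n^* \vertk {\cal V}^*$ in $\mathbb{H}$ (taken from the discussion preceding the theorem), negligibility of the quadratic remainder, and identification of the limiting variance via the continuous linear functional $g\mapsto\langle g,z\rangle_\mathbb{H}$. In fact you spell out more than the paper does, making explicit the $O_\mathbb{P}(1)$ argument for $\|{\cal V}_n^*\|_\mathbb{H}^2$, the continuous mapping step, and the Fubini justification for $\sigma_a^2$, all of which the paper leaves implicit in the sentence ``In view of \eqref{vnstern} and the fact that the distribution of $2\langle {\cal V}^*,z\rangle_\mathbb{H}$ is centered normal.''
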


We remark that a consistent estimator of $\sigma_a^2$ can be obtained by analogy
with the reasoning given in \cite{DEH:2019}, see Lemma 6.6, Theorem 6.7 and Remark 6.11 of that paper.

\section{Simulations}\label{secsimul}
In this section, we present the results of a Monte Carlo simulation study on the finite-sample power of the tests based on $U_{n,a}$.
This study is twofold in the sense that we consider testing for both univariate and multivariate normality, where the latter case
is restricted to dimensions $d\in\{2,3,5\}$. Moreover, the study is designed to match and complement  the counterparts in \cite{DEH:2019},
Section 7, and \cite{HV:2019}, since we take exactly the same setting with regard to sample size, nominal level of significance
 and selected alternative distributions. In this way, we facilitate an easy comparison with existing procedures.
In the univariate case, we consider sample sizes $n\in\{20,50,100\}$ and restrict the simulations to $n\in\{20,50\}$ in the multivariate setting.
The nominal level of significance is fixed throughout all simulations to 0.05. We simulated empirical critical values under $H_0$
for $d^{-2}\left(a/\pi\right)^{d/2}U_{n,a}$ with 100~000 replications, see Table \ref{tab:cv}. The rows entitled '$\infty$' give
approximations of the quantiles of the limit random element $U_{\infty,a}=\int {\cal S}^2(t) w_a(t) \, {\rm d}t$ in Theorem \ref{thmlimitnull} b).
The entries have been  calculated by the method presented in \cite{DEH:2019}, Section 7, setting $\ell = 100~000$ and $m=2~000$
for $d\in\{2,3,5,10\}$. Note that this approach only relies on the structure of the covariance kernel given in Theorem \ref{thmlimitnull} a),
the multivariate normal distribution,  and the weight function.

\begin{table}[t]
\centering
\begin{tabular}{cc|rrrrrrr}

$d$ & $n\backslash a$ & 0.1 & 0.25 & 0.5 & 1 & 2 & 3 & 5 \\
 \hline
 \multirow{4}{*}{1}& 20 & 147.99 & 25.86 & 7.14 & 2.46 & 1.47 & 1.27 & 1.02 \\
  & 50 & 149.69 & 26.20 & 7.29 & 2.62 & 1.61 & 1.39 & 1.13 \\
  & 100 & 150.85 & 26.45 & 7.34 & 2.65 & 1.63 & 1.42 & 1.16 \\
  & $\infty$ & 152.52 & 27.70 & 7.94 & 2.43 & 1.61 & 1.43 & 1.16  \\
\hline
\multirow{4}{*}{2}& 20 & 64.59 & 11.61 & 3.41 & 1.26 & 0.72 & 0.61 & 0.49 \\
  & 50& 65.63 & 11.87 & 3.50 & 1.33 & 0.79 & 0.68 & 0.56 \\
  & 100 & 65.81 & 11.94 & 3.52 & 1.34 & 0.80 & 0.70 & 0.58 \\
  & $\infty$ & 66.33 & 12.12 & 3.46 & 1.39 & 0.78 & 0.71 & 0.58  \\
   \hline
\multirow{4}{*}{3}& 20 & 46.49 & 8.22 & 2.45 & 0.91 & 0.49 & 0.40 & 0.33 \\
  & 50 & 46.81 & 8.37 & 2.52 & 0.97 & 0.55 & 0.47 & 0.38 \\
  & 100 & 46.88 & 8.41 & 2.53 & 0.97 & 0.56 & 0.48 & 0.40 \\
  & $\infty$ & 51.69 &  8.38 & 2.55 & 0.92 & 0.55 & 0.48 & 0.41 \\
\hline
\multirow{4}{*}{5}& 20 & 35.79 & 6.11 & 1.79 & 0.65 & 0.31 & 0.25 & 0.20 \\
  & 50 & 36.05 & 6.20 & 1.85 & 0.70 & 0.37 & 0.30 & 0.25 \\
  & 100 & 36.07 & 6.23 & 1.86 & 0.71 & 0.38 & 0.31 & 0.26 \\
  & $\infty$ & 39.38 & 6.27 & 1.90 & 0.68 & 0.38 & 0.32 & 0.27\\
\hline
\multirow{4}{*}{10}& 20 & 30.13 & 4.93 & 1.33 & 0.42 & 0.17 & 0.12 & 0.09 \\
   & 50 & 30.21 & 5.01 & 1.41 & 0.49 & 0.23 & 0.18 & 0.14 \\
   & 100 & 30.22 & 5.02 & 1.42 & 0.50 & 0.25 & 0.19 & 0.15 \\
   & $\infty$ & 32.70 & 5.39 & 1.47 & 0.52 & 0.25 & 0.20 & 0.16 \\
\end{tabular}
\caption{Empirical quantiles for $d^{-2}\left(a/\pi\right)^{d/2}U_{n,a}$ and $\alpha=0.05$ (100~000 replications)}\label{tab:cv}
\end{table}

In the univariate case, we consider the following alternatives: symmetric distributions, like the Student t$_{\nu}$-distribution
with $\nu \in \{3, 5, 10\}$ degrees of freedom, as well as the uniform distribution U$(-\sqrt{3}, \sqrt{3})$,
and asymmetric distributions, such as the $\chi^2_{\nu}$-distribution with $\nu \in \{5, 15\}$ degrees of freedom,
the beta distributions B$(1, 4)$ and B$(2, 5)$, and the gamma distributions $\Gamma(1, 5)$ and $\Gamma(5, 1)$,
both parametrized by their shape and rate parameter, the Gumbel distribution Gum$(1, 2)$ with location parameter 1
and scale parameter 2, the Weibull distribution W$(1, 0.5)$ with scale parameter 1 and shape parameter 0.5, and the lognormal
distribution LN$(0, 1)$.  As representatives of bimodal distributions,  we simulate the mixture of  normal distributions
NMix$(p, \mu, \sigma^2)$, where the random variables are generated by $(1 - p) \, {\rm N}(0, 1) + p \, {\rm N}(\mu, \sigma^2)$,
$p \in (0, 1)$, $\mu \in \R$, $\sigma > 0$. Note that these alternatives can also be found in the simulation studies
presented in \cite{BE:2019,DEH:2019,RDC:2010}. We chose these alternatives in order to ease the comparison with many other
existing tests.

We contrast the results in Table \ref{tab:pow.U.1} with those of Table 4 in \cite{DEH:2019}, which exhibits powers of the
related test statistic $T_{n,a}$. First we oppose the tests $T_{n,a}$ and $U_{n,a}$. Remarkably,  the test based on $U_{n,a}$
shows a better performance for the NMix-alternatives, especially for the choice of the tuning parameter $a\in\{0.25,0.5\}$.
On the other hand, $U_{n,a}$ is almost uniformly dominated by $T_{n,a}$ for the t$_{\nu}$-distribution. If the underlying
distribution is $\chi^2$, beta, gamma, Weibull, Gumbel or lognormal, both procedures have a comparable power.
Table 4 in \cite{DEH:2019} also provides finite-sample powers of strong either time-honored or recent tests for
normality, like the Shapiro--Wilk test, the Shapiro--Francia test, the Anderson--Darling test,
the Baringhaus--Henze--Epps--Pulley test (BHEP), see \cite{HW:1997}, the del
Barrio--Cuesta-Albertos--M\'{a}tran--Rodr\'{i}guez-Rodr\'{i}guez test (BCMR), see \cite{DBCMRR:1999}, and the Betsch--Ebner test,
see \cite{BE:2019}. For a description of the test statistics and critical values, see \cite{DEH:2019} and the references therein.
A comparison shows that,  for  suitable choice of the tuning parameter,  $U_{n,a}$ can compete with each of these tests,
sometimes outperforming them, for example in case of the uniform distribution, $n=20$, and $a=0.25$, and the
$\chi^2_{15}$-distribution for all sample sizes and $a=5$, but mostly being on the same power level.
It is interesting to see that the finite-sample power of $U_{n,a}$ depends heavily on the choice of $a$. This observation is
in contrast to the behavior of $T_{n,a}$, the power of which depends much less on $a$.

In the multivariate case, the  alternative distributions are selected to match those employed in the simulation
 studies in \cite{DEH:2019,HV:2019},  and are given as follows. Let NMix$(p,\mu,\Sigma)$ be the normal mixture distribution
generated by $(1 - p) \, {\rm N}_d(0, {\rm I}_d) + p \, {\rm N}_d(\mu, \Sigma)$, where $p \in (0, 1)$, $\mu \in \R^d$,
and $\Sigma$ is a positive definite matrix. In this notation, $\mu=3$ stands for a $d$-variate vector of 3's,
and $\Sigma={\rm B}_d$ is a $(d \times d)$-matrix containing 1's on the main diagonal, and each
off-diagonal entry has the value $0.9$.  We denote by t$_\nu(0,{{\rm I}}_d)$ the multivariate t$_{\nu}$-distribution
with $\nu$ degrees of freedom, see \cite{AG:2009}. The acronym DIST$^d(\vartheta)$ stands for a $d$-variate random vector
with i.i.d. marginal laws that belong to the distribution DIST with parameter $\vartheta$. In the sequel,
DIST is either the Cauchy distribution C, the logistic distribution L, the gamma distribution $\Gamma$, or the
Pearson Type VII  distribution P$_{VII}$. For the latter distribution, $\vartheta$ denotes the number of
degrees of freedom. The spherical symmetric distributions have been simulated using the {\tt R} package
{\tt distrEllipse}, see \cite{RKSC:2006}. These are denoted by $\mathcal{S}^d(\mbox{DIST})$, where DIST stands
for the distribution of the radii, and was chosen to be the exponential, the beta and the $\chi^2$-distribution.

Tables \ref{tab:pow.U.2} - \ref{tab:pow.U.5} can be contrasted to Tables 5 - 7 in \cite{DEH:2019},
and  for $n=50$,  with Tables 3 - 5 in \cite{HV:2019}. Again, we start with a comparison of $T_{n,a}$ and $U_{n,a}$.
For $d=2$ (see Table \ref{tab:pow.U.2} and Table 5 in \cite{DEH:2019}), $T_{n,a}$ is outperformed by $U_{n,a}$
for NMix$(0.1,3,I_2)$ and  NMix$(0.9,3,B_2)$,  but shows a stronger performance for NMix$(0.5,3,B_2)$. In case of
the multivariate t$_\nu$-distributions,  both procedures have a similar performance, as well as for the
DIST$^d(\vartheta)$ distributions. The spherical symmetric distributions are dominated by $U_{n,a}$ for a suitable
choice of the tuning parameter, except for the $\mathcal{S}^d(\chi^2_{5})$ distribution, where a similar behaviour
is asserted. Again,  $U_{n,a}$ seems to be much more sensitive to the choice of a proper tuning parameter
than $T_{n,a}$. Competing tests of multivariate normality are the Henze--Visagie test, see \cite{HV:2019},
the Henze--Jim\'{e}nez-Gamero test, see \cite{HJG:2019}, the BHEP-test, the Henze--Jim\'{e}nez-Gamero--Meintanis
test, see \cite{HJM:2019}, and the energy test, see \cite{SR:2005}. A description of the test statistics,
as well as procedures for computing critical values is found in \cite{HV:2019}. The BHEP-test performs best for
the NMix$(0.1,3,I_2)$-distribution (NMIX1 in \cite{HV:2019}) but is outperformed by $T_{n,a}$ for NMix$(0.5,0,B_2)$,
and by $U_{n,a}$ for the NMix$(0.9,3,B_2)$ (NMIX2 in \cite{HV:2019}), where these procedures show the best performance
of all tests considered. A similar behavior is observed for the t$_\nu$- and the spherical symmetric distributions,
where again $U_{n,a}$ and $T_{n,a}$ are strong competitors to all procedures considered.

\section{A real data example}\label{secdata}
\begin{figure}[t]
\centering
\includegraphics[width=4.5cm]{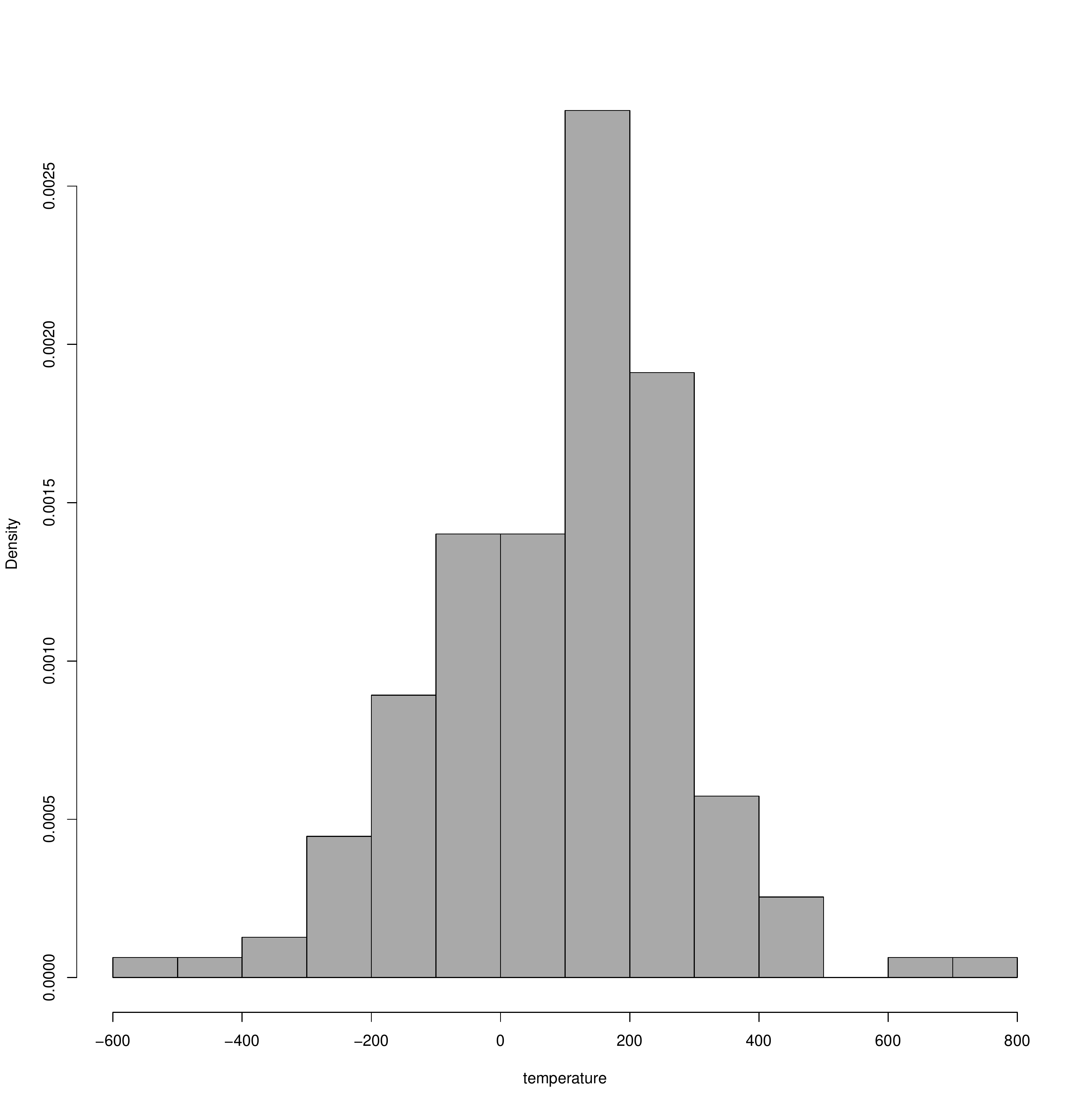}\hspace{0.5cm}\includegraphics[width=4.5cm]{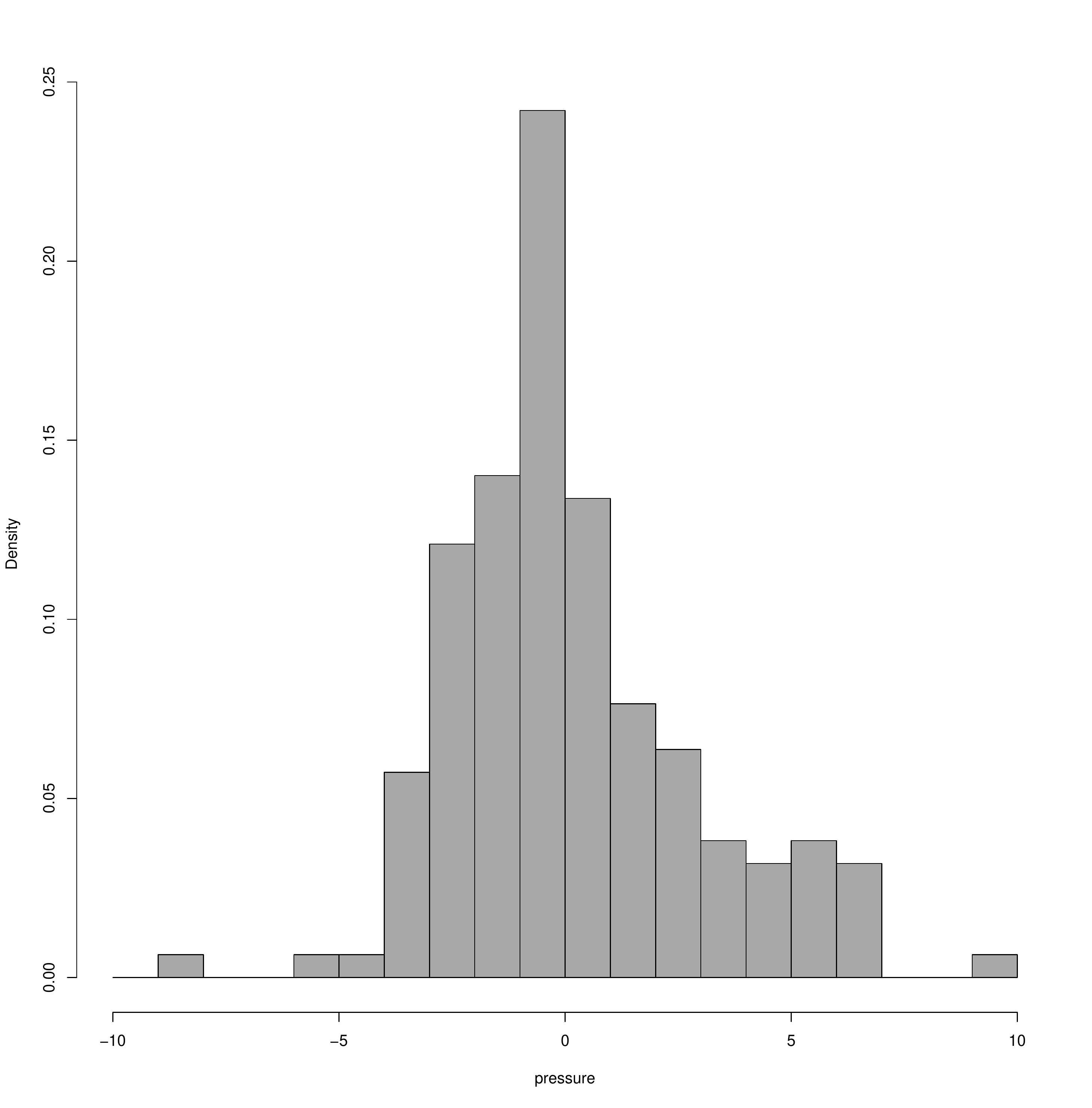}\hspace{0.5cm}\includegraphics[width=4.5cm]{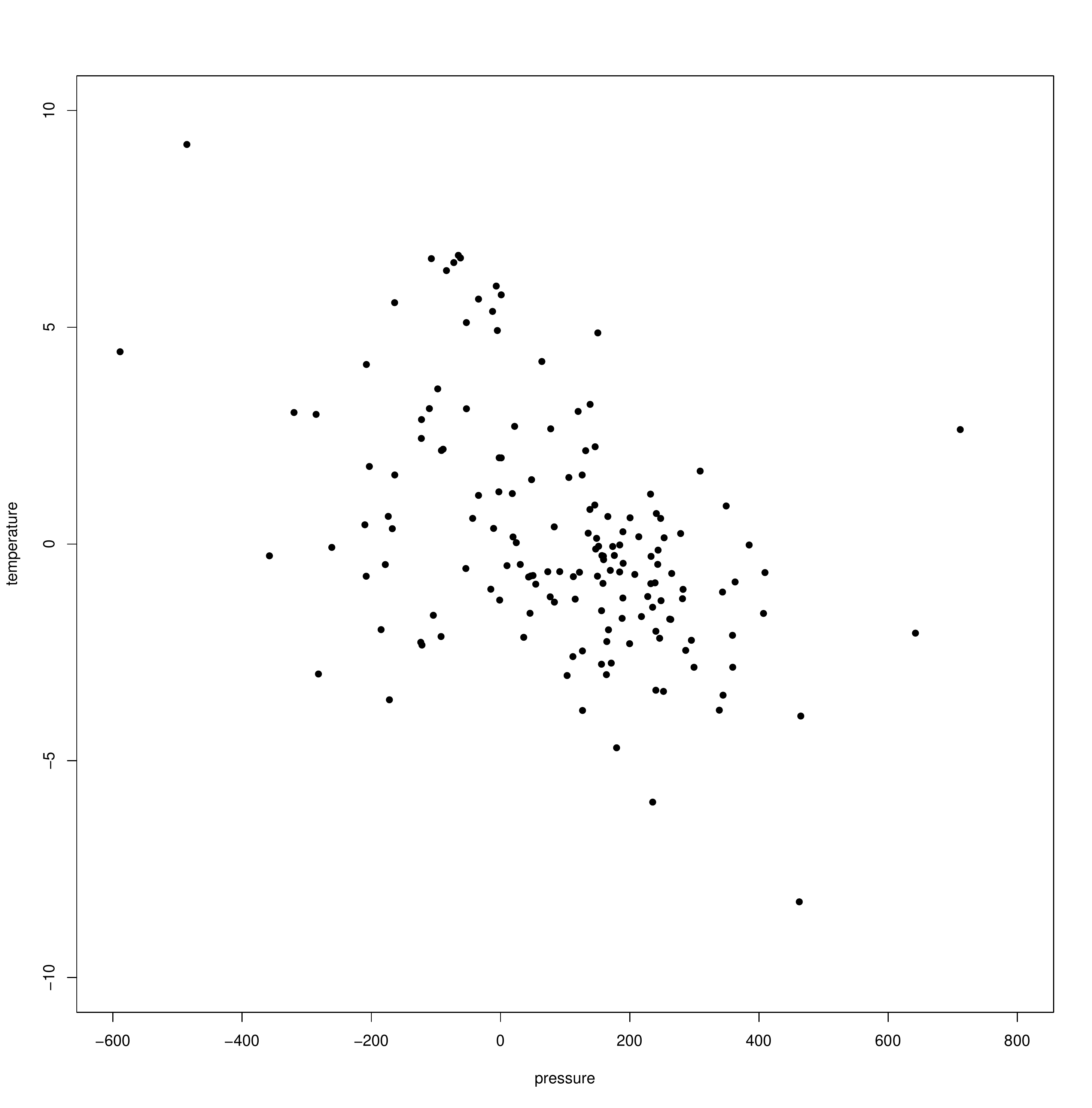} \\
\includegraphics[width=4.5cm]{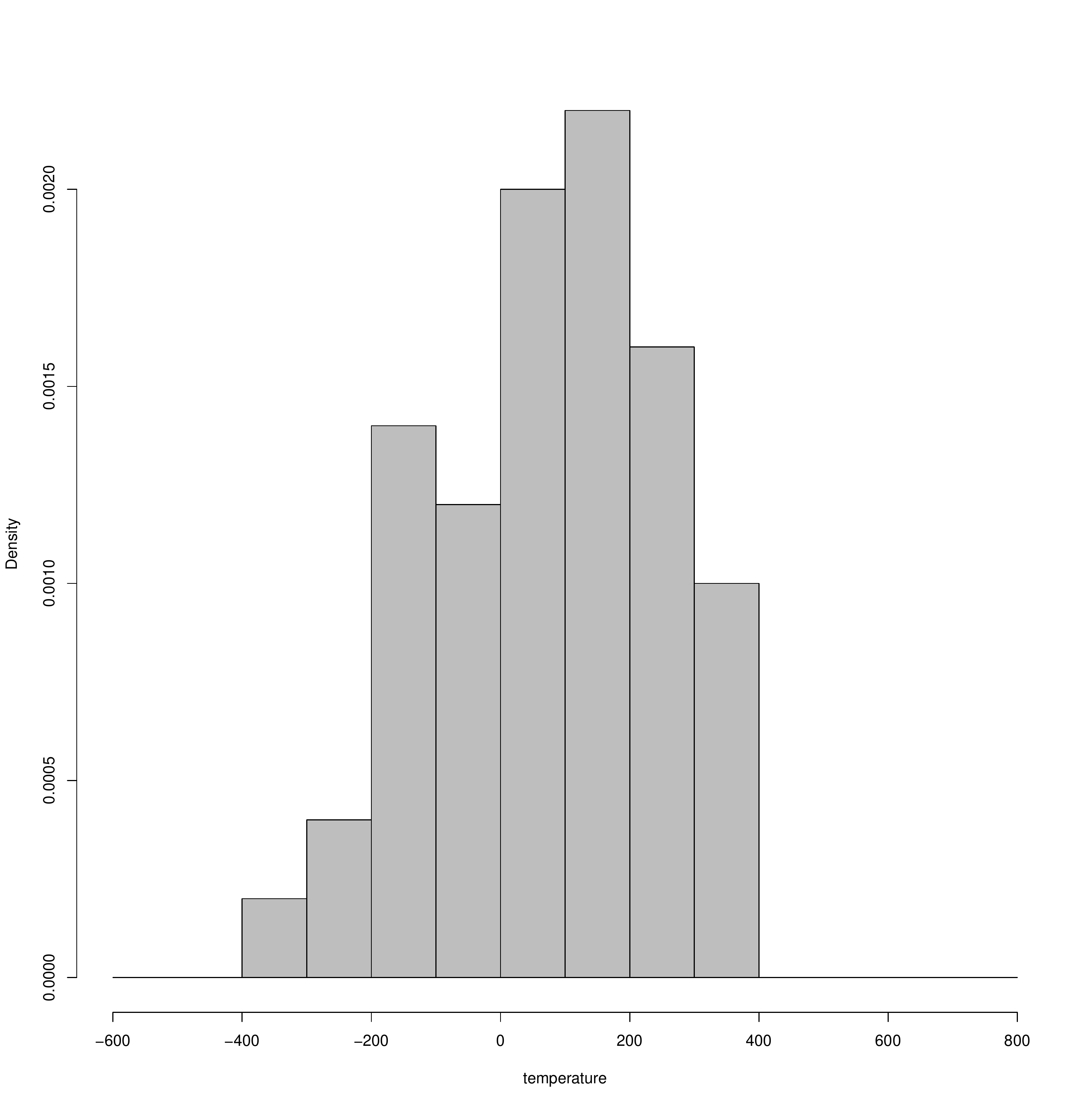}\hspace{0.5cm}\includegraphics[width=4.5cm]{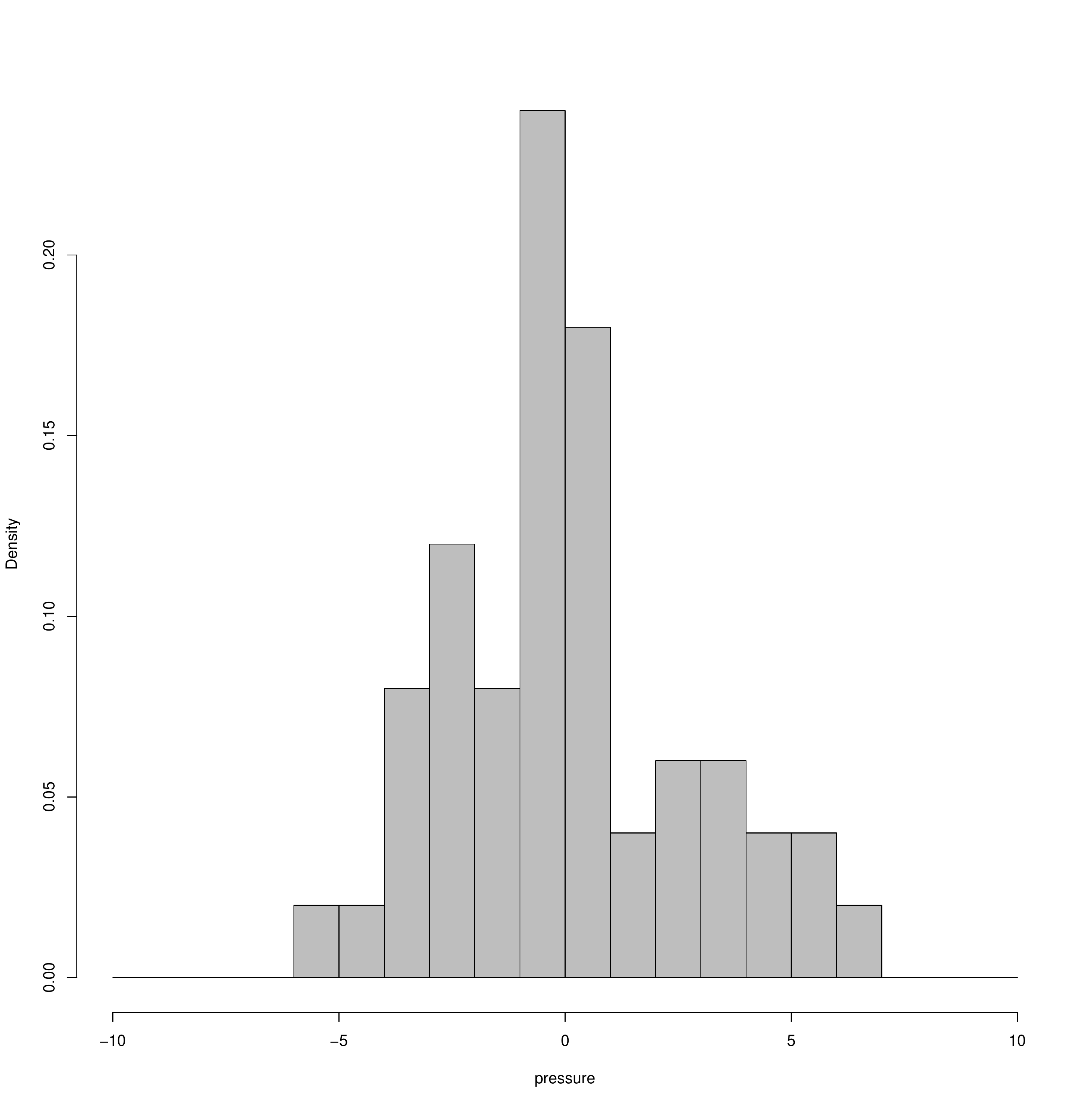}\hspace{0.5cm}\includegraphics[width=4.5cm]{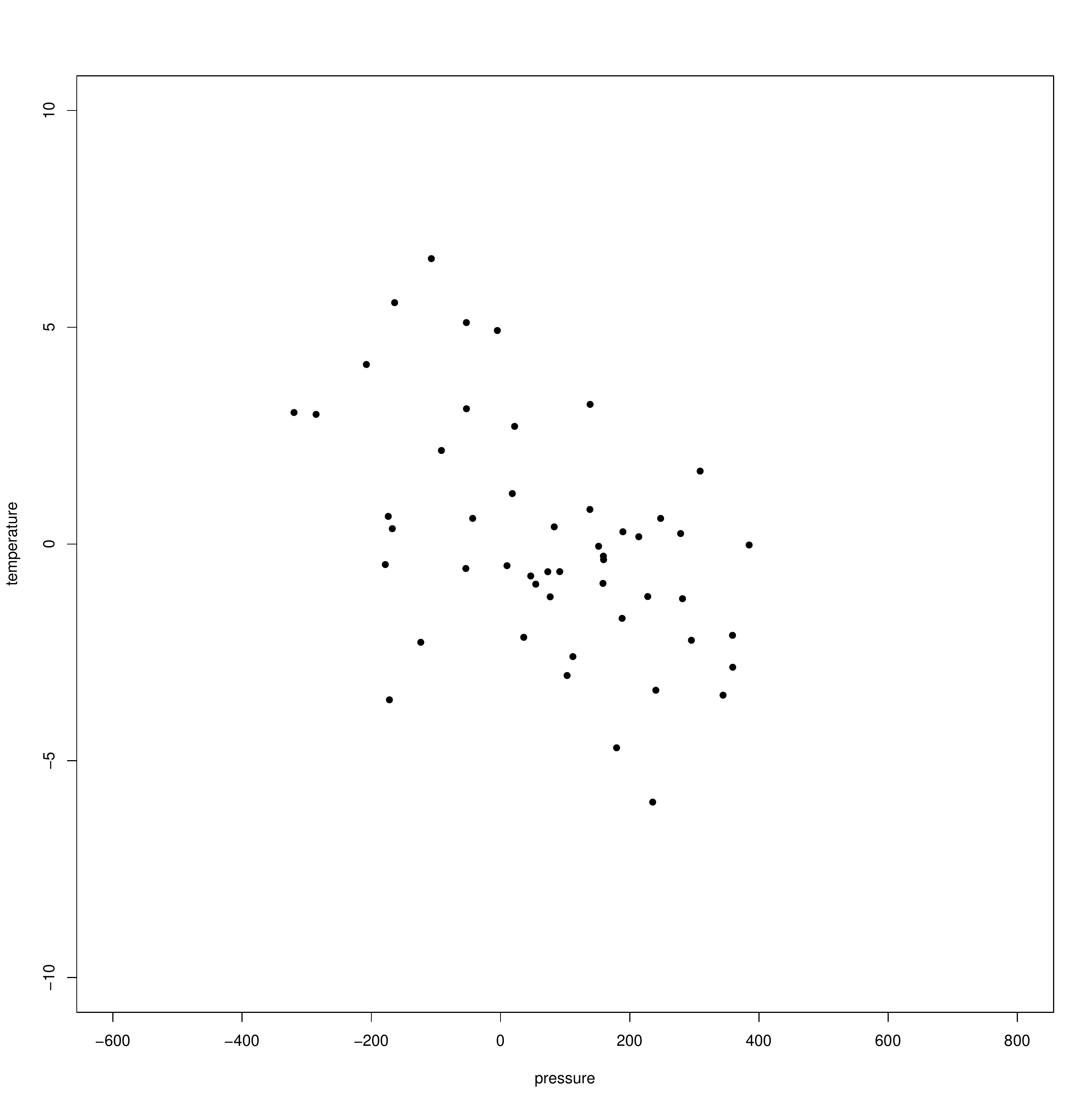}
\caption{Histogram of $n=157$ (upper row) and $n=50$ (lower row) differences between forecasts and observations of temperature (left) and pressure (middle) and scatterplot of temperature and pressure (right) in the North American Pacific Northwest.}\label{fig:weather}
\end{figure}
As a real data example,  we examine the meteorological data set \texttt{weather} provided in the \texttt{R} package \texttt{RandomFields},
see \cite{Setal:2019}, which consists of differences between forecasts and observations (forecasts minus observations) of
 temperature and pressure at $n=157$ locations in the North American Pacific Northwest.
 The data are pointwise realizations of a bivariate ($d=2$) error Gaussian random field, see Figure \ref{fig:weather}.
  The forecasts are from the GFS member of the University of Washington regional numerical weather prediction ensemble, see \cite{EM:2005},
   and they were valid on December 18, 2003 at 4 p.m. local time, at a forecast horizon of 48 hours.
   We ignore the given location of measurements in this evaluation and test the hypothesis that each pair of differences
   can be modeled as an i.i.d. copies from a bivariate normal distribution. In Table \ref{tab:real_data},
 we calculate empirical $p$-values based on 10~000 replications for $U_{n,a}$ for the univariate differences of temperature and pressure,
 as well as for the bivariate data for the whole data set, $n=157$, and for a random selection of $n=50$ points (selected in \texttt{R}
 with function \texttt{sample()} and seed fixed to '0721').
 Regarding the complete data set, we reject the hypothesis of normality in nearly all cases
 on a $5\%$ level of significance, while on a $1\%$ level of significance we are not able to reject $H_0$ for the differences
 in temperature. However, for the pressure and the bivariate data the hypothesis of normality is nearly always rejected.
 These results are not surprising, since the \texttt{weather} data set is an example of influence of spatial correlation,
 which has to be carefully modeled. In \cite{GKS:2010},  a bivariate Gaussian random field is fitted to the data, taking the mentioned spatial correlation into account, for a visualization of the locations see Figure 3 in \cite{GKS:2010}. For the subsample of points we see that the structure vanishes,
  and we throughout do not reject the hypotheses. Here, we have only applied our method as a proof of principle.
\begin{table}[b]
\centering
\begin{tabular}{lr|ccccccc}
Diff.& $n\backslash a$ & 0.1 & 0.25 & 0.5 & 1 & 2 & 3 & 5 \\ \hline
temperature & &  0.1042 & 0.0155 & 0.0102 & 0.0235 & 0.0292 & 0.0322 & 0.0407 \\
pressure  & 157 & 0.0128 & 0 & 0 & 0.0001 & 0.0001 & 0 & 0.0001 \\ \hdashline
bivariate & & 0.0001 & 0 & 0 & 0 & 0 & 0 & 0\\\hline
temperature & &  0.9472 & 0.6847 & 0.3675 & 0.3144 & 0.3145 & 0.3168 & 0.3337 \\
pressure  & 50 & 0.1649 & 0.2019 & 0.1822 & 0.2282 & 0.2169 & 0.2101 & 0.2109 \\\hdashline
bivariate & & 0.8485 & 0.6694 & 0.5528 & 0.5413 & 0.3497 & 0.2998 & 0.2879
\end{tabular}
\caption{Empirical $p$-values for $U_{n,a}$ for univariate and bivariate cases of the complete data set $n=157$ and the subsample $n=50$ (10~000 replications)}\label{tab:real_data}
\end{table}

\section{Conclusions and outlook}\label{secconclus}
We have introduced and studied a new affine invariant class of tests for multivariate normality that is easy to apply and
consistent against general alternatives. Although consistency has only been proved under the condition $\mathbb{E}\|X\|^4 < \infty$,
the test should be "all the more consistent" if $\mathbb{E}\|X\|^4 = \infty$, and we conjecture that, as is the case for the BHEP-tests,
also the test based on $U_{n,a}$ is consistent against {\em each} nonnormal alternative distribution.
A further topic of research
would be to choose the tuning parameter $a$ in an adaptive way, similar to the bootstrap based univariate approaches in \cite{AS:2015} and \cite{T:2019}.
It would also be of interest to obtain more information on the limit null distribution of $U_{n,a}$.
We finish the outlook by pointing out that, with respect to  the references in the introduction regarding
 other procedures and distributions, a similar analysis can be performed,  and it is of theoretical and practical relevance
 to study the resulting statistics in order to assess the influence of the options of estimating or not estimating certain of the pertaining functions.

After a comparison of $U_{n,a}$ and $T_{n,a}$ from \cite{DEH:2019},  and in view of the results of the simulation study,
 we recommend to use $T_{n,a}$, since it seems to be  more robust with respect to the choice of the tuning parameter $a$.
 Nevertheless, $U_{n,a}$ is a strong competitor,  and with a suitable data driven procedure for the choice of $a$ at hand,
 $U_{n,a}$ may turn out to be a favorable choice over the most classical and recent tests of uni- and multivariate normality.

\bibliographystyle{abbrv}
\bibliography{references}  

\begin{thebibliography}{10}

\bibitem{AS:2015}
J.~S. Allison and L.~Santana.
\newblock On a data-dependent choice of the tuning parameter appearing in
  certain goodness-of-fit tests.
\newblock {\em Journal of Statistical Computation and Simulation},
  85(16):3276--3288, 2015.

\bibitem{BEH:2017}
L.~Baringhaus, B.~Ebner, and N.~Henze.
\newblock The limit distribution of weighted $l^2$-statistics under fixed
  alternatives, with applications.
\newblock {\em Annals of The Institute of Statistical Mathematics},
  69(5):969--995, 2017.

\bibitem{BH:1991}
L.~Baringhaus and N.~Henze.
\newblock A class of consistent tests for exponentiality based on the empirical
  {L}aplace transform.
\newblock {\em Annals of the Institute of Statistical Mathematics},
  43(3):551--564, Sep 1991.

\bibitem{BHE:1991}
L.~Baringhaus and N.~Henze.
\newblock Limit distributions for measures of multivariate skewness and
  kurtosis based on projections.
\newblock {\em Journal of Multivariate Analysis}, 38(1):51--69, 1991.

\bibitem{BH:1992}
L.~Baringhaus and N.~Henze.
\newblock Limit distributions for {Mardia's} measure of multivariate skewness.
\newblock {\em The Annals of Statistics}, 20(4):1889--1902, 1992.

\bibitem{BE:2019}
S.~Betsch and B.~Ebner.
\newblock Testing normality via a distributional fixed point property in the
  {S}tein characterization.
\newblock {\em TEST}, Feb 2019.

\bibitem{DBCMRR:1999}
E.~del Barrio, J.~A. Cuesta-Albertos, C.~Matran, and J.~M. Rodriguez-Rodriguez.
\newblock Tests of goodness of fit based on the ${L}_2$-{W}asserstein distance.
\newblock {\em The Annals of Statistics}, 27(4):1230--1239, 1999.

\bibitem{DEH:2019}
P.~D\"{o}rr, B.~Ebner, and N.~Henze.
\newblock Testing multivariate normality by zeros of the harmonic oscillator in
  characteristic function spaces.
\newblock {\em arXiv}, page 1–29, 2019.

\bibitem{EP:1973}
M.~L. Eaton and M.~D. Perlman.
\newblock The non-singularity of generalized sample covariance matrices.
\newblock {\em The Annals of Statistics}, 1(4):710--717, 1973.

\bibitem{EM:2005}
F.~A. Eckel and C.~F. Mass.
\newblock Aspects of effective mesoscale, short-range ensemble forecasting.
\newblock {\em Weather and Forecasting}, 20(3):328--350, 2005.

\bibitem{AG:2009}
A.~Genz and F.~Bretz.
\newblock {\em Computation of Multivariate Normal and t Probabilities}.
\newblock Lecture Notes in Statistics, 195. Springer, Dordrecht, 2009.

\bibitem{GKS:2010}
T.~Gneiting, W.~Kleiber, and M.~Schlather.
\newblock Mat\'{e}rn cross-covariance functions for multivariate random fields.
\newblock {\em Journal of the American Statistical Association},
  105(491):1167--1177, 2010.

\bibitem{HEN:1994a}
N.~Henze.
\newblock On {Mardia's} kurtosis test for multivariate normality.
\newblock {\em Communications in Statistics - Theory and Methods},
  23(4):1031--1045, 1994.

\bibitem{H:1997}
N.~Henze.
\newblock Limit laws for multivariate skewness in the sense of {M\'{o}ri},
  {Rohatgi} and {Sz\'{e}kely}.
\newblock {\em Statistics \& Probability Letters}, 33(3):299--307, 1997.

\bibitem{HJG:2019}
N.~Henze and M.~D. Jim\'{e}nez-Gamero.
\newblock A class of tests for multinormality with i.i.d. and {GARCH} data
  based on the empirical moment generating function.
\newblock {\em TEST}, 28(2):499–521, 2019.

\bibitem{HJM:2019}
N.~Henze, M.~D. Jim\'{e}nez-Gamero, and S.~G. Meintanis.
\newblock Characterizations of multinormality and corresponding tests of fit,
  including for {GARCH} models.
\newblock {\em Econometric Theory}, 35(3):510--546, 2019.

\bibitem{HK:2002}
N.~Henze and B.~Klar.
\newblock Goodness-of-fit tests for the inverse {G}aussian distribution based
  on the empirical {L}aplace transform.
\newblock {\em Annals of the Institute of Statistical Mathematics},
  54(2):425--444, 2002.

\bibitem{HME:2012}
N.~Henze, S.~G. Meintanis, and B.~Ebner.
\newblock Goodness-of-fit tests for the {G}amma distribution based on the
  empirical {L}aplace transform.
\newblock {\em Communications in Statistics - Theory and Methods},
  41(9):1543--1556, 2012.

\bibitem{HV:2019}
N.~Henze and J.~Visagie.
\newblock Testing for normality in any dimension based on a partial
  differential equation involving the moment generating function.
\newblock {\em The Annals of the Institute of Statistical Mathematics}, page
  1–29, 2019.

\bibitem{HW:1997}
N.~Henze and T.~Wagner.
\newblock A new approach to the {BHEP} tests for multivariate normality.
\newblock {\em Journal of Multivariate Analysis}, 61(1):1--23, 1997.

\bibitem{HEU:2015}
T.~Hsing and R.~Eubank.
\newblock {\em Theoretical Foundations of Functional Data Analysis, with an
  Introduction to Linear Operators}.
\newblock Wiley, 2015.

\bibitem{MAR:1970}
K.~V. Mardia.
\newblock Measures of multivariate skewness and kurtosis with applications.
\newblock {\em Biometrika}, 57(3):519--530, 1970.

\bibitem{MI:2003}
S.~Meintanis and G.~Iliopoulos.
\newblock Tests of fit for the {R}ayleigh distribution based on the empirical
  {L}aplace transform.
\newblock {\em Annals of the Institute of Statistical Mathematics},
  55(1):137--151, 2003.

\bibitem{M:2010}
S.~G. Meintanis.
\newblock Testing skew normality via the moment generating function.
\newblock {\em Mathematical Methods of Statistics}, 19(1):64--72, 2010.

\bibitem{MH:2010}
S.~G. Meintanis and Z.~Hl\'{a}vka.
\newblock Goodness-of-fit tests for bivariate and multivariate skew-normal
  distributions.
\newblock {\em Scandinavian Journal of Statistics}, 37(4):701--714, 2010.

\bibitem{MRS:1993}
T.~F. {M\'ori}, V.~K. {Rohatgi}, and G.~J. {Sz\'ekely}.
\newblock {On multivariate skewness and kurtosis.}
\newblock {\em {Teor. Veroyatn. Primen.}}, 38(3):675--679, 1993.

\bibitem{RM:2018}
M.~Riad and O.~F. A.~E. Mabood.
\newblock A new goodness of fit test for the beta distribution based on the
  empirical {L}aplace transform.
\newblock {\em Advances and Applications in Statistics}, 53(2):165--177, 2018.

\bibitem{RDC:2010}
X.~{Rom\~{a}o}, R.~Delgado, and A.~Costa.
\newblock An empirical power comparison of univariate goodness-of-fit tests for
  normality.
\newblock {\em Journal of Statistical Computation and Simulation},
  80(5):545--591, 2010.

\bibitem{RKSC:2006}
P.~Ruckdeschel, M.~Kohl, T.~Stabla, and F.~Camphausen.
\newblock S4 classes for distributions.
\newblock {\em R News}, 6(2):2--6, 2006.

\bibitem{Setal:2019}
M.~S., A.~Malinowski, M.~Oesting, D.~Boecker, K.~Strokorb, S.~Engelke,
  J.~Martini, F.~Ballani, O.~Moreva, J.~Auel, P.~J. Menck, S.~Gross, U.~Ober,
  P.~Ribeiro, B.~D. Ripley, R.~Singleton, B.~Pfaff, and {R Core Team}.
\newblock {\em {RandomFields}: Simulation and Analysis of Random Fields}, 2019.
\newblock R package version 3.3.6.

\bibitem{SR:2005}
G.~J. Sz{\'e}kely and M.~L. Rizzo.
\newblock A new test for multivariate normality.
\newblock {\em Journal of Multivariate Analysis}, 93:58--80, 2005.

\bibitem{T:2019}
C.~Tenreiro.
\newblock On the automatic selection of the tuning parameter appearing in
  certain families of goodness-of-fit tests.
\newblock {\em Journal of Statistical Computation and Simulation},
  89(10):1780--1797, 2019.

\end{thebibliography}

\begin{table}[t]
\centering
\begin{tabular}{lc|rrrrrrrr}
 \multicolumn{2}{c}{} & \multicolumn{7}{c}{$U_{n,a}$} \\
 Alt. & $n\backslash a$ & 0.1 & 0.25 & 0.5 & 1 &  2 & 3 & 5\\
  \hline
\multirow{3}{*}{N$(0,1)$} & 20 & 5 & 5 & 5 & 5 & 5 & 5 & 5 \\
  & 50 & 5 & 5 & 5 & 5 & 5 & 5 & 5 \\
  & 100 & 5 & 5 & 5 & 5 & 5 & 5 & 5 \\ \hline
  \multirow{3}{*}{NMix$(0.3,1,0.25)$} & 20 & 18 & 28 & 27 & 20 & 19 & 19 & 19 \\
  & 50 & 45 & 65 & 61 & 49 & 46 & 45 & 43 \\
  & 100 & 79 & 93 & 90 & 80 & 77 & 76 & 73 \\
  \multirow{3}{*}{NMix$(0.5,1,4)$} & 20 & 20 & 40 & 44 & 39 & 36 & 35 & 33 \\
  & 50 & 53 & 83 & 84 & 71 & 64 & 60 & 54 \\
  & 100 & 88 & 99 & 99 & 95 & 91 & 87 & 79 \\
  \hline
  \multirow{3}{*}{t$_3$} & 20 & 12 & 24 & 37 & 39 & 38 & 37 & 36 \\
  & 50 & 21 & 49 & 66 & 68 & 65 & 63 & 61 \\
  & 100 & 36 & 75 & 88 & 89 & 87 & 85 & 82 \\
  \multirow{3}{*}{t$_5$} & 20 & 6 & 11 & 19 & 22 & 21 & 21 & 21 \\
  & 50 & 8 & 21 & 36 & 40 & 38 & 37 & 35 \\
  & 100 & 11 & 34 & 54 & 59 & 57 & 55 & 51 \\
  \multirow{3}{*}{t$_{10}$} & 20 & 5 & 7 & 10 & 11 & 11 & 11 & 11 \\
  & 50 & 6 & 8 & 15 & 18 & 17 & 17 & 16 \\
  & 100 & 6 & 10 & 21 & 26 & 25 & 24 & 23 \\
  \hline
  \multirow{3}{*}{U$(-\sqrt{3},\sqrt{3})$} & 20 & 12 & 22 & 18 & 2 & 1 & 1 & 1 \\
  & 50 & 30 & 59 & 64 & 20 & 4 & 2 & 1 \\
  & 100 & 67 & 95 & 97 & 87 & 50 & 20 & 3 \\
  \hline
  \multirow{3}{*}{$\chi^2_5$} & 20 & 10 & 24 & 39 & 41 & 42 & 42 & 42 \\
  & 50 & 21 & 60 & 82 & 85 & 85 & 85 & 85 \\
  & 100 & 44 & 93 & 99 & 100 & 100 & 100 & 99 \\
  \multirow{3}{*}{$\chi^2_{15}$} & 20 & 5 & 9 & 16 & 18 & 19 & 19 & 19 \\
  & 50 & 7 & 18 & 37 & 44 & 45 & 45 & 46 \\
  & 100 & 9 & 34 & 64 & 75 & 76 & 76 & 77 \\
  \hline
  \multirow{3}{*}{B$(1,4)$} & 20 & 21 & 39 & 49 & 46 & 46 & 46 & 45 \\
  & 50 & 55 & 87 & 95 & 94 & 92 & 92 & 91 \\
  & 100 & 92 & 100 & 100 & 100 & 100 & 100 & 100 \\
  \multirow{3}{*}{B$(2,5)$} & 20 & 7 & 11 & 15 & 14 & 14 & 14 & 14 \\
  & 50 & 10 & 26 & 41 & 41 & 40 & 40 & 39 \\
  & 100 & 16 & 55 & 80 & 82 & 80 & 79 & 78 \\
  \hline
  \multirow{3}{*}{$\Gamma(1,5)$} & 20 & 39 & 64 & 75 & 73 & 73 & 73 & 72 \\
  & 50 & 86 & 99 & 100 & 100 & 100 & 100 & 99 \\
  & 1006 & 100 & 100 & 100 & 100 & 100 & 100 & 100 \\
  \multirow{3}{*}{$\Gamma(5,1)$} & 20 & 6 & 12 & 21 & 24 & 25 & 25 & 25 \\
  & 50 & 9 & 27 & 51 & 58 & 59 & 60 & 60 \\
  & 100 & 13 & 54 & 83 & 90 & 90 & 90 & 90 \\
  \hline		
  \multirow{3}{*}{W$(1,0.5)$} & 20 & 39 & 65 & 76 & 74 & 74 & 74 & 73 \\
  & 50 & 86 & 98 & 100 & 100 & 100 & 100 & 99 \\
  & 100 & 100 & 100 & 100 & 100 & 100 & 100 & 100 \\
  \multirow{3}{*}{Gum$(1,2)$} & 20 & 7 & 16 & 28 & 32 & 33 & 33 & 33 \\
  & 50 & 10 & 37 & 62 & 70 & 71 & 71 & 71 \\
  & 100 & 17 & 67 & 90 & 95 & 95 & 95 & 95 \\
  \multirow{3}{*}{LN$(0,1)$} & 20 & 62 & 82 & 90 & 89 & 89 & 89 & 89 \\
  & 50 & 97 & 100 & 100 & 100 & 100 & 100 & 100 \\
  & 100 & 100 & 100 & 100 & 100 & 100 & 100 & 100
\end{tabular}
\caption{Empirical power of $U_{n,a}$ ($d=1$, $\alpha = 0.05$, 10~000 replications)}\label{tab:pow.U.1}
\end{table}

\begin{table}[b]
\centering
\begin{tabular}{lc|rrrrrrrr}
\multicolumn{2}{c}{} & \multicolumn{7}{c}{$U_{n,a}$} \\
 Alt. & $n\backslash a$ & 0.1 & 0.25 & 0.5 & 1 &  2 & 3 & 5\\
  \hline
\multirow{2}{*}{N$_2(0,{\rm I}_2)$} & 20 & 5 & 5 & 5 & 5 & 5 & 5 & 5 \\
  & 50  & 5 & 5 & 5 & 5 & 5 & 5 & 5 \\
  \hline
  \multirow{2}{*}{NMix$(0.1,3,{\rm I}_2)$} & 20 & 8 & 18 & 38 & 40 & 39 & 39 & 38 \\
  & 50 & 12 & 48 & 83 & 88 & 87 & 86 & 85 \\
  \multirow{2}{*}{NMix$(0.5,0,{\rm B}_2)$} & 20 & 7 & 12 & 19 & 19 & 17 & 17 & 16 \\
  & 50 & 11 & 27 & 42 & 37 & 30 & 26 & 24 \\
  \multirow{2}{*}{NMix$(0.9,0,{\rm B}_2)$} & 20 & 7 & 12 & 23 & 26 & 25 & 24 & 24 \\
  & 50 & 8 & 23 & 48 & 54 & 52 & 49 & 47 \\
  \hline
  \multirow{2}{*}{t$_3(0,{\rm I}_2)$}& 20 & 15 & 34 & 53 & 57 & 55 & 53 & 51 \\
  & 50 & 32 & 69 & 87 & 90 & 89 & 87 & 84 \\
  \multirow{2}{*}{t$_5(0,{\rm I}_2)$}& 20 & 8 & 15 & 29 & 34 & 33 & 32 & 30 \\
  & 50 & 11 & 31 & 56 & 63 & 62 & 59 & 56 \\
  \multirow{2}{*}{t$_{10}(0,{\rm I}_2)$} & 20 & 6 & 8 & 14 & 17 & 16 & 16 & 15 \\
  & 50 & 6 & 11 & 24 & 30 & 29 & 27 & 26 \\
  \hline
  \multirow{2}{*}{C$^2(0,1)$} & 20  & 86 & 95 & 98 & 97 & 96 & 96 & 95 \\
  & 50 & 100 & 100 & 100 & 100 & 100 & 100 & 100 \\
  \multirow{2}{*}{L$^2(0,1)$} & 20 & 6 & 8 & 14 & 17 & 16 & 16 & 15 \\
  & 50 & 7 & 12 & 25 & 31 & 30 & 28 & 26 \\
  \multirow{2}{*}{$\Gamma^2(0.5,1)$} & 20 & 88 & 98 & 99 & 97 & 97 & 97 & 96 \\
  & 50 & 100 & 100 & 100 & 100 & 100 & 100 & 100 \\
  \multirow{2}{*}{$\Gamma^2(5,1)$} & 20 & 7 & 13 & 23 & 25 & 26 & 26 & 27 \\
  & 50 & 8 & 28 & 59 & 67 & 68 & 69 & 68 \\
\multirow{2}{*}{P$_{VII}^2(10)$} & 20 & 6 & 7 & 12 & 13 & 13 & 13 & 12 \\
  & 50 & 6 & 9 & 18 & 24 & 23 & 21 & 20 \\
  \hline
  \multirow{2}{*}{$\mathcal{S}^2(\mbox{Exp}(1))$} & 20 & 61 & 77 & 82 & 79 & 76 & 72 & 68 \\
  & 50 & 96 & 99 & 100 & 99 & 99 & 98 & 96 \\
  \multirow{2}{*}{$\mathcal{S}^2({\rm B}(1,2))$}& 20 & 28 & 35 & 31 & 23 & 21 & 19 & 17 \\
  & 50 & 65 & 73 & 60 & 39 & 31 & 25 & 18 \\
  \multirow{2}{*}{$\mathcal{S}^2(\chi^2_5)$}& 20 & 7 & 10 & 20 & 23 & 21 & 21 & 20 \\
  & 50 & 8 & 18 & 37 & 43 & 41 & 38 & 34
\end{tabular}
\caption{Empirical power of $U_{n,a}$ ($d=2$, $\alpha = 0.05$, 10~000 replications)}\label{tab:pow.U.2}
\end{table}

\begin{table}[b]
\centering
\begin{tabular}{lc|rrrrrrrr}
\multicolumn{2}{c}{} & \multicolumn{7}{c}{$U_{n,a}$} \\
 Alt. & $n\backslash a$ & 0.1 & 0.25 & 0.5 & 1 &  2 & 3 & 5\\
  \hline
\multirow{2}{*}{N$_3(0,{\rm I}_3)$} & 20 & 5 & 5 & 5 & 5 & 5 & 5 & 5 \\
  &50  & 5 & 5 & 5 & 5 & 5 & 5 & 4 \\
  \hline
  \multirow{2}{*}{NMix$(0.1,3,{\rm I}_3)$} & 20 & 9 & 18 & 38 & 39 & 40 & 40 & 39 \\
  & 50 & 12 & 48 & 89 & 93 & 91 & 90 & 89 \\
  \multirow{2}{*}{NMix$(0.5,0,{\rm B}_3)$} & 20 & 10 & 23 & 38 & 35 & 32 & 29 & 27 \\
  & 50 & 20 & 61 & 81 & 72 & 62 & 55 & 46 \\
  \multirow{2}{*}{NMix$(0.9,0,{\rm B}_3)$} & 20 & 9 & 19 & 38 & 43 & 42 & 40 & 38 \\
  &50 & 13 & 44 & 74 & 81 & 79 & 77 & 75 \\
  \hline
  \multirow{2}{*}{t$_3(0,{\rm I}_3)$} & 20 & 19 & 44 & 67 & 70 & 68 & 66 & 63 \\
  &50 & 41 & 85 & 96 & 97 & 97 & 96 & 94 \\
  \multirow{2}{*}{t$_5(0,{\rm I}_3)$} &  20 & 9 & 20 & 40 & 44 & 43 & 40 & 38 \\
  &50 & 15 & 45 & 75 & 81 & 80 & 77 & 72 \\
  \multirow{2}{*}{t$_{10}(0,{\rm I}_3)$} & 20 & 6 & 10 & 18 & 21 & 20 & 19 & 18 \\
  &50 & 7 & 16 & 35 & 43 & 41 & 38 & 35 \\
  \hline
  \multirow{2}{*}{C$^3(0,1)$} & 20 & 89 & 98 & 99 & 99 & 99 & 98 & 98 \\
  &50 & 100 & 100 & 100 & 100 & 100 & 100 & 100 \\
  \multirow{2}{*}{L$^3(0,1)$} & 20 & 6 & 9 & 16 & 18 & 17 & 16 & 15 \\
  &50 & 7 & 13 & 30 & 36 & 34 & 30 & 27 \\
  \multirow{2}{*}{$\Gamma^3(0.5,1)$} & 20 & 85 & 97 & 99 & 98 & 97 & 97 & 97 \\
  &50 & 100 & 100 & 100 & 100 & 100 & 100 & 100 \\
  \multirow{2}{*}{$\Gamma^3(5,1)$} & 20 & 7 & 12 & 23 & 24 & 26 & 27 & 27 \\
  &50 & 9 & 28 & 60 & 68 & 71 & 71 & 71 \\
  \multirow{2}{*}{P$_{VII}^3(10)$} & 20 & 6 & 8 & 13 & 14 & 14 & 13 & 12 \\
  &50 & 7 & 10 & 22 & 28 & 27 & 24 & 22 \\
  \hline
  \multirow{2}{*}{$\mathcal{S}^3(\mbox{Exp}(1))$} & 20 & 86 & 95 & 97 & 96 & 95 & 93 & 89 \\
  &50 & 100 & 100 & 100 & 100 & 100 & 100 & 100 \\
  \multirow{2}{*}{$\mathcal{S}^3({\rm B}(1,2))$} & 20 & 60 & 73 & 74 & 66 & 61 & 55 & 48 \\
  &50 & 96 & 99 & 98 & 96 & 94 & 89 & 78 \\
  \multirow{2}{*}{$\mathcal{S}^3(\chi^2_5)$} & 20 & 14 & 31 & 50 & 51 & 49 & 45 & 41 \\
  &50 & 31 & 68 & 86 & 88 & 86 & 82 & 76 \\
\end{tabular}
\caption{Empirical power of $U_{n,a}$ ($d=3$, $\alpha = 0.05$, 10~000 replications)}\label{tab:pow.U.3}
\end{table}

\begin{table}[b]
\centering
\begin{tabular}{lc|rrrrrrrr}
\multicolumn{2}{c}{} & \multicolumn{7}{c}{$U_{n,a}$} \\
 Alt. & $n\backslash a$ & 0.1 & 0.25 & 0.5 & 1 &  2 & 3 & 5\\
 \hline
\multirow{2}{*}{N$_5(0,{\rm I}_5)$} & 20 & 5 & 5 & 5 & 5 & 5 & 5 & 5 \\
  &50 & 5 & 5 & 5 & 5 & 5 & 5 & 5 \\
  \hline
  \multirow{2}{*}{NMix$(0.1,3,{\rm I}_5)$} & 20 & 9 & 17 & 28 & 31 & 33 & 33 & 32 \\
  &50 & 13 & 37 & 74 & 79 & 84 & 85 & 85 \\
  \multirow{2}{*}{NMix$(0.5,0,{\rm B}_5)$}& 20 & 24 & 57 & 74 & 69 & 64 & 59 & 53 \\
  &50 & 47 & 97 & 100 & 98 & 97 & 94 & 89 \\
  \multirow{2}{*}{NMix$(0.9,0,{\rm B}_5)$}& 20 & 14 & 36 & 54 & 58 & 59 & 56 & 54 \\
  &50 & 43 & 86 & 94 & 95 & 95 & 94 & 93 \\
  \hline
  \multirow{2}{*}{t$_3(0,{\rm I}_5)$}& 20 & 29 & 67 & 83 & 84 & 82 & 79 & 75 \\
  &50 & 73 & 99 & 100 & 100 & 100 & 100 & 99 \\
   \multirow{2}{*}{t$_5(0,{\rm I}_5)$}& 20 & 15 & 37 & 56 & 59 & 56 & 52 & 48 \\
  &50 & 37 & 82 & 94 & 96 & 95 & 93 & 89 \\
  \multirow{2}{*}{t$_{10}(0,{\rm I}_5)$}& 20 & 9 & 16 & 26 & 28 & 26 & 24 & 22 \\
  &50 & 14 & 38 & 61 & 65 & 63 & 58 & 52 \\
  \hline
  \multirow{2}{*}{C$^5(0,1)$}& 20 & 92 & 100 & 100 & 100 & 100 & 100 & 99 \\
  &50 & 100 & 100 & 100 & 100 & 100 & 100 & 100 \\
  \multirow{2}{*}{L$^5(0,1)$}& 20 & 6 & 11 & 17 & 17 & 16 & 15 & 13 \\
  &50& 10 & 23 & 40 & 43 & 40 & 36 & 31 \\
  \multirow{2}{*}{$\Gamma^5(0.5,1)$}& 20 & 73 & 95 & 98 & 97 & 98 & 98 & 97 \\
  &50 & 100 & 100 & 100 & 100 & 100 & 100 & 100 \\
  \multirow{2}{*}{$\Gamma^5(5,1)$}& 20 & 8 & 12 & 18 & 21 & 23 & 24 & 23 \\
  &50 & 11 & 30 & 54 & 62 & 69 & 71 & 71 \\
  \multirow{2}{*}{P$_{VII}^5(10)$}& 20 & 6 & 9 & 14 & 15 & 14 & 13 & 12 \\
  &50 & 8 & 18 & 30 & 33 & 31 & 28 & 24 \\
  \hline
  \multirow{2}{*}{$\mathcal{S}^5(\mbox{Exp}(1))$}& 20 & 97 & 100 & 100 & 100 & 100 & 99 & 99 \\
  &50 & 100 & 100 & 100 & 100 & 100 & 100 & 100 \\
  \multirow{2}{*}{$\mathcal{S}^5({\rm B}(1,2))$}& 20 & 88 & 97 & 98 & 96 & 95 & 92 & 88 \\
  &50 & 100 & 100 & 100 & 100 & 100 & 100 & 100 \\
  \multirow{2}{*}{$\mathcal{S}^5(\chi^2_5)$}& 20 & 44 & 77 & 88 & 87 & 84 & 80 & 74 \\
  &50 & 87 & 100 & 100 & 100 & 100 & 100 & 99
\end{tabular}
\caption{Empirical power of $U_{n,a}$ ($d=5$, $\alpha = 0.05$, 10~000 replications)}\label{tab:pow.U.5}
\end{table}



\end{document}